\begin{document}
\newenvironment {proof}{{\noindent\bf Proof.}}{\hfill $\Box$ \medskip}

\newtheorem{theorem}{Theorem}[section]
\newtheorem{lemma}[theorem]{Lemma}
\newtheorem{condition}[theorem]{Condition}
\newtheorem{proposition}[theorem]{Proposition}
\newtheorem{remark}[theorem]{Remark}
\newtheorem{definition}[theorem]{Definition}
\newtheorem{hypothesis}[theorem]{Hypothesis}
\newtheorem{corollary}[theorem]{Corollary}
\newtheorem{example}[theorem]{Example}
\newtheorem{descript}[theorem]{Description}
\newtheorem{assumption}[theorem]{Assumption}

\newcommand{\ba}{\begin{align}}
\newcommand{\ea}{\end{align}}

\def\P{\mathbb{P}}
\def\R{\mathbb{R}}
\def\E{\mathbb{E}}
\def\N{\mathbb{N}}
\def\Z{\mathbb{Z}}
\def\sC{\mathcal{C}}
\def\sF{\mathcal{F}}

\renewcommand {\theequation}{\arabic{section}.\arabic{equation}}
\def \non{{\nonumber}}
\def \hat{\widehat}
\def \tilde{\widetilde}
\def \bar{\overline}

\def\ind{{\mathchoice {\rm 1\mskip-4mu l} {\rm 1\mskip-4mu l}
{\rm 1\mskip-4.5mu l} {\rm 1\mskip-5mu l}}}

\title{\Large\ {\bf Estimation of parameter sensitivities for stochastic reaction networks using tau-leap simulations }}

\author[1]{Ankit Gupta\thanks{ankit.gupta@bsse.ethz.ch}}
\author[2]{Muruhan Rathinam\thanks{muruhan@umbc.edu}}
\author[1]{Mustafa Khammash\thanks{mustafa.khammsh@bsse.ethz.ch}} 
\affil[1]{\small Department of Biosystems Science and Engineering, ETH Zurich, Mattenstrasse 26, 4058 Basel, Switzerland.}
\affil[2]{\small Department of Mathematics and Statistics, University of Maryland Baltimore County, 1000 Hilltop Circle, Baltimore, MD 21250, U.S.A.}
\date{\today}
\maketitle

\begin{abstract}
We consider the important problem of estimating parameter sensitivities for stochastic models of reaction networks that describe the dynamics as a continuous-time Markov process over a discrete lattice. These sensitivity values are useful for understanding network properties, validating their design and identifying the pivotal model parameters. Many methods for sensitivity estimation have been developed, but their computational feasibility suffers from the critical bottleneck of requiring time-consuming Monte Carlo simulations of the exact reaction dynamics. To circumvent this problem one needs to devise methods that speed up the computations while suffering acceptable and quantifiable loss of accuracy. We develop such a method by first deriving a novel integral representation of parameter sensitivity and then demonstrating that this integral may be approximated by any convergent tau-leap method. Our method is easy to implement, works with any tau-leap simulation scheme and its accuracy is proved to be similar to that of the underlying tau-leap scheme. We demonstrate the efficiency of our methods through numerical examples. We also compare our method with the tau-leap versions of certain finite-difference schemes that are commonly used for sensitivity estimations.\\
\end{abstract} 

\noindent {\bf Keywords:} parameter sensitivity; reaction networks; Markov process; tau-leap simulations \\
 
\noindent {\bf Mathematical Subject Classification (2010):}  60J22; 60J27; 60H35; 65C05.

\setcounter{equation}{0}

\section{Introduction} \label{sec:intro}

The study of chemical reaction networks is an essential component of 
the emerging fields of Systems and Synthetic Biology
\cite{Alon,Vilar,Gardner}. 
Traditionally chemical reaction networks were 
modeled in the deterministic setting, where the dynamics is represented 
by a set of ordinary differential equations (ODEs) or partial differential
equations (PDEs). In the study of intracellular chemical reactions, 
some chemical species are present in low copy numbers. Since the behavior of 
individual molecules is best described by a stochastic process,
in the low molecular copy number regime, the copy numbers of the molecular
species itself is better modeled by a stochastic process than by ODEs
\cite{GP}. Only in the limit of large molecular copy numbers, one 
expects the deterministic models to be accurate \cite{DASurvey}. 
While our work in this paper is focused on biochemical reaction networks as
primary examples, we emphasize that the mathematical framework of reaction networks can also be used to describe a wide range of other phenomena in fields such as Epidemiology \cite{Hethcote}  and Ecology \cite{Bascompte}.

Suppose $\theta$ is a parameter (like ambient temperature, cell-volume, ATP concentration etc.) that influences the rate of firing of reactions. Let $( X_\theta (t) )_{t  \geq 0 }$ be the $\theta$-dependent Markov process representing the reaction dynamics, and suppose that for some real-valued function $f$ and observation time $T$, our output of interest is $f (  X_\theta(T) ) $. This output is a random variable and we are interested in determining the sensitivity of its expectation $\E( f( X_\theta(T) ) )$ w.r.t. infinitesimal changes in the parameter $\theta$. We define this sensitivity value, denoted by $S_\theta(f,T)$, as the partial derivative
\begin{align}
\label{defn_paramsens}
S_\theta(f,T) :=  \frac{\partial}{\partial \theta} \E( f( X_\theta(T) ) ). 
\end{align} 
Determining these parametric-sensitivity values are useful in many applications, such as, understanding network design and its robustness properties \cite{Stelling}, identifying critical reaction components, inferring model parameters \cite{Fink2009} and fine-tuning a system's behavior \cite{Feng}.

Generally the sensitivities of the form \eqref{defn_paramsens} cannot be
directly evaluated, but instead, they need to be estimated with Monte Carlo
simulations of the dynamics $( X_\theta (t) )_{t  \geq 0 }$. Many methods have
been developed for this task \cite{IRN,Gir,KSR1,KSR2,DA,Our,Gupta2}, but they all
rely on exact simulations of $( X_\theta (t) )_{t  \geq 0 }$ that can be
performed using schemes such as Gillespie's \emph{stochastic simulation
  algorithm} (SSA) \cite{GP}. This severely constrains the computational
feasibility of these sensitivity estimation methods because these exact
simulations become highly impractical if the rate of occurrence of reactions
is high \cite{GillespieRev}, which is typically the case. The main difficulty
is that that exact simulation schemes keep track of each reaction event which
is very time-consuming. To avoid this problem, tau-leaping methods have been
developed that proceed by combining many reaction-firings over small time
intervals \cite{tleap1}. Tau-leap methods have been
shown to produce good approximations of the reaction dynamics, at a small
fraction of the computational cost of exact simulations
\cite{tleap1,tleap2,Rathinam2003,Burrage2004,AndersonPost,Rathinam-ElSamad,
  Yang-Rathinam, Yang-Rathinam-Shen, Tempone2011, Tempone2014}. Their accuracy and stability has also been
investigated theoretically in many papers \cite{Rathinam1,Li,Gang,Rathinam3, Cao-Petzold+Stability}.
 
Our goal in this paper is to develop a method that takes advantage of the computational efficiency of tau-leap
methods for the purpose of estimating sensitivity values of the form
\eqref{defn_paramsens}. Since tau-leap methods introduce a bias 
in the estimation, it is highly desirable to start with an unbiased method for
computing sensitivities (instead of biased methods such as the Finite
Difference (FD)) and then replace exact SSA simulations by a suitable tau-leap
method. Having only one form of bias, modulated by the tau-leap step size,
allows one to control the bias more effectively and also facilitates the
design of \emph{multilevel} strategies that eliminate or reduce the estimator bias and enhance its computational efficiency \cite{Anderson2012,Lester2015,Tempone2014}. Among the existing methods in the literature, only the {\em Girsanov Transformation} (GT) method \cite{Glynn1,Gir}, the \emph{Auxiliary Path
  Algorithm}(APA)\cite{Our} and the \emph{Poisson Path Algorithm} (PPA)
\cite{Gupta2} are unbiased. Since the GT method in general suffers from large 
variance \cite{Gupta2,DA,Our,KSR1,KSR2,Rathinam2} and the APA/PPA methods are not directly
amenable to tau-leap approximation, we develop a variant of the PPA method in which exact SSA simulations
are replaced by tau-leap simulations.
Our method, called \emph{Tau Integral Path Algorithm}
($\tau$IPA), works with any underlying tau-leap simulation scheme and it is
based on a novel integral representation of parameter sensitivity
$S_\theta(f,T)$ that we derive in this paper. 
We provide computational examples that show that using $\tau$IPA we can often
\emph{trade-off} a small amount of bias for large savings in the overall
computational costs for sensitivity estimation. We prove that the bias
incurred by $\tau$IPA depends on the step-size in the same way as the bias of
the tau-leap scheme chosen for simulations. Moreover if we substitute the
tau-leap simulations in $\tau$IPA with the exact SSA generated simulations,
then we obtain a new unbiased method for sensitivity estimation which we call the `exact' IPA or eIPA that is similar to the PPA method in
\cite{Gupta2}. Two main reasons for the high variance of the GT method
  that have been identified in the existing literature are: 1) low magnitude
  of the sensitivity parameter $\theta$ (see \cite{Gupta2,Our}) and 2) large
  system-size or volume under the classical volume scaling of the reaction
  network \cite{Rathinam2}. The second issue is somewhat resolved by the
  \emph{centered Girsanov Transformation} (CGT) method \cite{Rathinam2,
    warren2012steady} and our numerical results indicate that the volume
  scaling behavior of eIPA is similar to CGT (see Section
  \ref{ex:bd}). However eIPA does not suffer from high variance when the
  sensitivity parameter $\theta$ is small. In addition, when
  $\theta=0$, GT or CGT methods are not even applicable while eIPA does not suffer
  from this restriction. These observations make eIPA more appealing
  than CGT for unbiased estimation of parameter sensitivity.

For the sake of comparison, we use \emph{tau-leap versions} of
certain commonly used finite-difference estimators (see \cite{DA,KSR1,morshed2017efficient}) that
approximate the infinitesimal derivative in \eqref{defn_paramsens} by a
finite-difference (see \eqref{fd:form1}). Such estimators are computationally
faster than $\tau$IPA (in simulation time per trajectory) but 
they suffer from two sources of bias
(finite-differencing and tau-leap approximations) unlike $\tau$IPA which only
incurs bias from the latter source. We note that while in some examples the biases nearly cancel each other fortuitously, as a general principle one has no logical
reason to expect such cancellation.  
  
This paper is organized as follows. In Section \ref{sec:prelim} we 
describe the stochastic model for reaction dynamics and the sensitivity
estimation problem. We also discuss the existing sensitivity estimation
methods, the tau-leap simulation schemes and explain the rationale
for using such simulations in sensitivity estimation. Section \ref{sec:mainres} contains the main results of this
paper which include a novel integral representation of the exact
  sensitivity in Section \ref{sec:integralrepresentation}, a result on error bounds for the
  sensitivity estimates of $\tau$IPA in Section \ref{sec:tauest1} and the novel tau-leap sensitivity estimation method $\tau$IPA in Section \ref{sec:tauest2}. In Section \ref{sec:num_Examples} we provide computational examples to compare our method with other methods and finally in Section \ref{sec:conc} we conclude and provide directions for future research.

\section{Preliminaries} \label{sec:prelim}

Consider a reaction network with $d$ species and $K$ reactions. We describe its kinetics by a continuous time Markov process whose state at any time is a vector in the non-negative integer orthant $\N^d_0$ comprising of the molecular counts of all the $d$ species. The state evolves due to transitions caused by the firing of reactions. We suppose that when the state is $x$, the rate of firing of the $k$-th reaction is given by the \emph{propensity} function $\lambda_k(x)$ and the corresponding state-displacement is denoted by the stoichiometric vector $\zeta_k \in \Z^d$. There are several ways to represent the Markov process $(X(t))_{t \geq 0 }$ that describes the reaction kinetics under these assumptions. We can specify the generator (see Chapter 4 in \cite{EK}) of this process by the operator
\begin{align}
\label{defn_gen}
\mathbb{A} h (x)  = \sum_{k=1}^K \lambda_k(x ) \left( h(x+\zeta_k) - h(x) \right),
\end{align}
where $h$ is any bounded real-valued function on $\N^d_0$. Alternatively we can express the Markov process directly by its random time-change representation (see Chapter 7 in \cite{EK})
\begin{align}
\label{rtc_rep1}
X(t) = X(0) + \sum_{k=1}^K Y_k \left(   \int_{0}^{t}  \lambda_k( X (s) ) ds \right) \zeta_k,
\end{align}
where $\{Y_k : k=1,\dots,K\}$ is a family of independent unit rate Poisson processes. Since the process $(X(t))_{t \geq 0 }$ is Markovian, it can be equivalently specified by writing the \emph{Kolmogorov forward equation} for the evolution of its probability distribution $p_t(x) := \P( X(t)  = x )$ at each state $x$:
\begin{align}
\label{defn_cme}
\frac{ d p_{t}(x) }{dt} = & \sum_{k=1}^K  p_{t}( x - \zeta_k) \lambda_k(y -  \zeta_k) -p_t(x )\sum_{k=1}^K  \lambda_k(x).
\end{align}
This set of \emph{coupled} ordinary differential equations (ODEs) is termed as the \emph{Chemical Master Equation} (CME) in the biological literature \cite{DASurvey}. As the number of ODEs in this set is typically infinite, the CME is nearly impossible to solve directly, except in very restrictive cases. A common strategy is to estimate its solution with \emph{pathwise} simulations of the process $(X(t))_{t \geq 0}$ using Monte Carlo schemes such as Gillespie's SSA \cite{GP}, the \emph{next reaction method} \cite{NR}, the \emph{modified next reaction method} \cite{AndMod}, and so on. While these schemes are easy to implement, they become computationally infeasible for even moderately large networks, because they account for each and every reaction event. To resolve this issue, tau-leaping methods have been developed which will be described in greater detail in Section \ref{tau_leap_methods}.

We now assume that each propensity function $\lambda_k$ depends on a real-valued system parameter $\theta$. To emphasize this dependence we write the rate of firing of the $k$-th reaction at state $x$ as $\lambda_k(x,\theta)$ instead of $\lambda_k(x)$. Let $( X_\theta(t) )_{t  \geq 0 }$ be the Markov process representing the reaction dynamics with these parameter-dependent propensity functions. As stated in the introduction, for a function $f : \N^d_0 \to \R$ and an observation time $T \geq 0$, our goal is to determine the sensitivity value $S_\theta(f,T) $ defined by \eqref{defn_paramsens}. This value cannot be computed directly for most examples of interest and so we need to find ways of estimating it using simulations of the process $( X_\theta(t) )_{t  \geq 0 }$. Such simulation-based sensitivity estimation methods work by specifying the construction of a random variable $s_\theta(f,T)$ whose expected value is ``close" to the true sensitivity value $S_\theta(f,T)$, i.e.
\begin{align}
\label{gen_sens_form}
S_\theta(f,T) \approx \E( s_\theta(f,T) ). 
\end{align}
Once such a construction is available, a large number (say $N$) of independent realizations $s_1,\dots,s_N$ of this random variable $s_\theta(f,T)$ are obtained and the sensitivity is estimated by computing their empirical mean $\hat{\mu}_N$ as
 \begin{align}
 \label{defn_empirical_mean}
\hat{\mu}_N = \frac{1}{N} \sum_{i=1}^N s_i.
\end{align}
This estimator $\hat{\mu}_N$ is a random variable with mean and variance
\begin{align}
\label{defn_mean_var_estimator}
\mu = \E\left( \hat{\mu}_N \right) = \E( s_\theta(f,T) )  \qquad \textnormal{and}  \qquad  \sigma^2_N =  \textnormal{Var}( \hat{\mu}_N) = \frac{ \sigma^2 }{N} 
\end{align}
respectively, where $\sigma^2 = \textnormal{Var}(s_\theta(f,T) )$. For a large sample size $N$, the distribution of $ \hat{\mu}_N $ is approximately Gaussian with mean $\mu$ and variance $\sigma^2_N$, due to the Central Limit Theorem. The \emph{standard deviation} $\sigma_N$ measures the \emph{statistical spread} of the estimator $\hat{\mu}_N $, that is inversely proportional to its \emph{statistical precision}. The sample size $N$ must be large enough to ensure that $\sigma_N$ is small relative to $\mu$, i.e.\ for some small parameter $\epsilon > 0$, we should have 
\begin{align}
\label{rel_estimatorcond}
\frac{ \textnormal{RSD} }{ \sqrt{N} } \leq \epsilon,
\end{align} where $\textnormal{RSD}:= \sigma/| \mu |$ is the \emph{relative standard deviation} of the random variable $s_\theta(f,T)$. If such a condition holds, then $\hat{\mu}_N$ is a reliable estimator for the true sensitivity value $S_\theta(f,T)$ because it is very likely to assume a value close to its mean $\mu= \E( s_\theta(f,T) ) $ which in turn is close to $S_\theta(f,T)$ (see \eqref{gen_sens_form}). In practice both $\mu$ and $\sigma$ are unknown, but we can estimate them as $\mu  \approx \hat{\mu}_N$ and $\sigma \approx \sqrt{N} \hat{\sigma}_N$ where 
\begin{align}
\label{est_std_dev}
 \hat{\sigma}_N = \frac{1}{ \sqrt{N (N-1) } }  \sqrt{ \sum_{i=1}^N (s_i -  \hat{ \mu}_N)^2 }.
\end{align}
is the estimated standard deviation $\sigma_N$ of the estimator.

 The performance of any sensitivity estimation method (say $\mathcal{X}$) depends on the following three key metrics that are based on the properties of random variable $s_\theta(f,T)$:  
 \begin{enumerate}
 \item The \emph{bias} $\mathcal{B}(\mathcal{X}) =   \E( s_\theta(f,T) ) - S_\theta(f,T) $, which is the error incurred by the approximation \eqref{gen_sens_form}.
 \item The \emph{variance} $\mathcal{V}(\mathcal{X}) = \textnormal{Var}(s_\theta(f,T) )$ of random variable $s_\theta(f,T)$.
 \item The \emph{computational cost} $\mathcal{C}(\mathcal{X})$ of generating one sample of $s_\theta(f,T)$.
 \end{enumerate}
The bias $\mathcal{B}(\mathcal{X})$ can be positive or negative, and its absolute value $|\mathcal{B}(\mathcal{X})|$ can be seen as the upper-bound on the statistical accuracy that can be achieved with method $\mathcal{X}$ by increasing the sample size $N$ \cite{ReviewRahbek}. As mentioned before, the standard deviation $\sigma( \mathcal{X} ) = \sqrt{ \mathcal{V}(\mathcal{X}) }$ measures the statistical precision of the method $\mathcal{X}$ and its magnitude relative to the mean $\mu(\mathcal{X}) =   \E( s_\theta(f,T) )$ determines the number of samples $N$ that are needed to produce a reliable estimate. In particular, to satisfy condition \eqref{rel_estimatorcond} for the relative standard deviation $\textnormal{RSD}( \mathcal{X} ) = \sigma( \mathcal{X} )  /|\mu(\mathcal{X})|$, the number of samples $N_\epsilon$ needed would be around $N_\epsilon := (\textnormal{RSD}( \mathcal{X} ) )^2  \epsilon^{-2}$. Hence the total cost of the estimation procedure is 
\begin{align}
\label{overallcomputationcost}
N_\epsilon \mathcal{C}(\mathcal{X}) \approx  (\textnormal{RSD}( \mathcal{X} ) )^2 \mathcal{C}(\mathcal{X})  \epsilon^{-2}  =  \frac{  \mathcal{V}( \mathcal{X} ) }{ (\mu(\mathcal{X}  )  )^2 } \mathcal{C}(\mathcal{X})  \epsilon^{-2},
\end{align}
 where $\mathcal{C}(\mathcal{X})$ is the CPU time required for constructing one realization of $s_\theta(f,T)$. The goal of a good estimation method is to simultaneously minimize the three quantities $|\mathcal{B}(\mathcal{X})|$, $\mathcal{V}(\mathcal{X})$  and $\mathcal{C}(\mathcal{X})$. This creates various conflicts and trade-offs among the existing sensitivity estimation methods as we now discuss.

\subsection{Biased methods} \label{sec:biasedmethods}

A sensitivity estimation method $\mathcal{X}$ is called \emph{biased} if $\mathcal{B}(\mathcal{X})  \neq 0$. The most commonly used biased methods are the \emph{finite-difference schemes} which approximate the infinitesimal derivative in the definition of parameter sensitivity (see \eqref{defn_paramsens}) by a finite-difference of the form
\begin{align}
\label{fd:form1}
S_{\theta,h}(f,T) = \frac{\E\left( f( X_{\theta +h } (T) ) -   f( X_\theta(T) ) \right)}{h},
\end{align}
for a small perturbation $h$. The processes $X_\theta$ and $X_{\theta +h}$ represent the Markovian reaction dynamics with values of the sensitive parameter set to $\theta$ and $\theta +h$ respectively. These two processes can be simulated independently \cite{IRN} but it is generally better to couple them in order to reduce the variance of the associated estimator. The two commonly used coupling strategies are called \emph{Common Reaction Paths} (CRP) \cite{KSR1} and \emph{Coupled Finite Differences} (CFD) \cite{DA} and they are based on the random time-change representation \eqref{rtc_rep1}.

The finite-difference approximation \eqref{fd:form1} for the true sensitivity value can be expressed as the expectation $\E( s_{\theta,h}(f,T) )$ of the following random variable
\begin{align*}
s_{\theta,h}(f,T) =  \frac{f( X_{\theta +h } (T) ) -   f( X_\theta(T) ) }{h}.
\end{align*}
The three metrics (bias, variance and computational cost) based on this random variable define the performance of CRP and CFD. Since both these methods estimate the same quantity $S_{\theta,h}(f,T)$, they have the same bias (i.e. $\mathcal{B}( \textnormal{CRP}) = \mathcal{B}( \textnormal{CFD})$). However in many cases it is found that the CFD coupling is \emph{tighter} than the CRP coupling, resulting in a lower variance of $s_{\theta,h}(f,T)$ (i.e. $\mathcal{V}( \textnormal{CFD}) < \mathcal{V}( \textnormal{CRP})$) (see \cite{DA}). For each realization of $s_{\theta,h}(f,T)$, both CRP and CFD require simulation of a coupled trajectory $(X_\theta,X_{\theta +h} )$ in the time interval $[0,T]$. The computational costs of such a simulation is roughly $2 \mathcal{C}_0$, where $\mathcal{C}_0$ is the cost of \emph{exactly} simulating the process $X_\theta$ using Gillespie's SSA \cite{GP} or a similar method.\footnote{In fact the cost of generating a realization of $s_\theta(f,T)$ is usually smaller for CFD in comparison to CRP (i.e. $\mathcal{C}( \textnormal{CFD}) < \mathcal{C}( \textnormal{CRP})$), because the CFD coupling is such that if $X_\theta (t) = X_{\theta +h}(t)$ for some $t < T$, then this equality will hold for the remaining time-interval $[t,T]$, allowing us to directly set $s_{\theta,h}(f,T) = 0$ without completing the simulation in the interval $[t,T]$. }


Finite-difference schemes introduce a bias in the estimate whose size is
proportional to the perturbation value $h$ (i.e.\ $\mathcal{B}(
\textnormal{CRP}) = \mathcal{B}( \textnormal{CFD}) \propto h$), but the
constant of proportionality can be quite large in many cases, leading to
significant errors even for small values of $h$ \cite{Gupta2}. Unfortunately
we cannot circumvent this problem by choosing a very small $h$ because the
variance is proportional to $1/h$ (i.e. $\mathcal{V}(
\mathcal{\textnormal{CRP}} ) , \mathcal{V}( \mathcal{\textnormal{CFD}} )
\propto 1/h$). Therefore if a very small $h$ is selected, the variance will be
enormous and the sample-size required to produce a statistically precise
estimate will be very large, imposing a heavy computational burden on the
estimation procedure \cite{Gupta2}. This trade-off between bias and variance
is the main drawback of finite-difference schemes and there does not exist a
strategy for selecting $h$ that optimally balances these two quantities. Note
that unlike bias and variance, the computational cost of generating a sample
(i.e. $\mathcal{C}( \textnormal{CRP} )$ or $\mathcal{C}( \textnormal{CFD} )$)
does not change significantly with $h$, thereby ensuring that regardless of $h$,
the total computational burden varies linearly with the required number of
samples $N$. Apart from finite-difference schemes, there exists another
  biased method, called the  \emph{regularized pathwise-derivative} method \cite{KSR2} for estimating the sensitivity value \eqref{defn_paramsens}, but we do not discuss this approach in this paper.

           \subsection{Unbiased methods} \label{subsec:unbiasedmethods}

A sensitivity estimation method $\mathcal{X}$ is called \emph{unbiased} if $\mathcal{B}(\mathcal{X}) =0$. The main advantage of unbiased methods is that the estimation can in principle be made as accurate as possible by increasing the sample size $N$. The first unbiased method for sensitivity estimation is called the \emph{Girsanov Transformation} (GT) method \cite{Glynn1,Gir}, which works by estimating the $\theta$-derivative of the probability distribution of $X_\theta$. The GT method is easy to implement and the computation cost of generating each sample is roughly $\mathcal{C}_0$ -- the cost of \emph{exact} simulation of the
process $X_\theta$. The main issue with the GT method is that generally the variance of its associated random variable $s_\theta(f,T)$ is very large and so the number of samples needed to obtain a statistically precise estimate is very high \cite{DA,KSR1}. So far two reasons have been identified for this behavior. Firstly, it has been shown that for mass-action models (see \cite{DASurvey}) this variance can become unbounded when the magnitude of the sensitive reaction rate-constant $\theta$ approaches zero \cite{Gupta2,Our}. This is a serious issue because biological networks often consist of \emph{slow} reactions which are characterized by low values of the associated rate-constants. Furthermore the GT method does not allow one to estimate the sensitivity w.r.t.\ a rate-constant set to zero. Such sensitivity values are useful for understanding network design as it allows one to probe the effect of presence or absence of reactions. Another reason for the high variance of GT estimator was provided in \cite{Rathinam2} where it was theoretically established that this variance can grow boundlessly as the system expands in size, i.e. the system volume $V$ tends to infinity. This issue is somewhat ameliorated by the \emph{centered Girsanov Transformation} (CGT) method \cite{warren2012steady} but the problem with small reaction rate-constants persists.        

We now discuss a couple of unbiased methods that have been recently proposed. These methods are called the \emph{Auxiliary Path Algorithm}(APA )\cite{Our} and the \emph{Poisson Path Algorithm} (PPA) \cite{Gupta2}, and they are based on \emph{exact} representations of the form \eqref{gen_sens_form} for the parameter sensitivity \eqref{defn_paramsens}. For both the methods, sampling the random variable $s_\theta (f,T)$ requires simulation of a fixed number $M_0$ of \emph{additional} paths of the process $X_\theta$. It was shown in \cite{Our} that in comparison to the GT method, the computational cost of generating each sample for APA is much higher (i.e. $\mathcal{C}( \textnormal{APA}) \gg  \mathcal{C}( \textnormal{GT})$) but this is often compensated by the fact that its variance is much lower (i.e. $\mathcal{V}( \textnormal{APA}) \ll  \mathcal{V}( \textnormal{GT})$), resulting in a smaller overall cost of estimation \eqref{overallcomputationcost}. The reason for the higher sampling cost for APA is that it needs estimates of certain unknown quantities at each jump-time of the process $X_\theta$ in the time interval $[0,T]$, which can be very large in number even for small networks. In PPA, this problem is resolved by \emph{randomly selecting} a small number of these unknown quantities for estimation in such a way that the estimator remains unbiased. Due to this extra randomness, the sample variance for PPA is generally greater than APA (i.e. $\mathcal{V}( \textnormal{PPA}) >  \mathcal{V}( \textnormal{APA})$) but the computational cost for realizing each sample is much lower (i.e. $\mathcal{C}( \textnormal{PPA}) \ll  \mathcal{C}( \textnormal{APA})$). Moreover in comparison to APA, PPA is far easier to implement and has lower memory requirements, making it an attractive unbiased method for sensitivity estimation. In \cite{Gupta2} it is shown using many examples that for a given level of statistical accuracy, PPA can be more efficient than GT and also the finite-difference schemes CFD and CRP. The computational cost of generating each sample in PPA is roughly $(2 M_0+1) \mathcal{C}_0$, where $M_0$ is a small number that upper-bounds the expected number of unknown quantities that will be estimated using additional paths. For both APA and PPA, the parameter $M_0$ serves as a \emph{trade-off} factor between the computational cost and the variance - as $M_0$ increases, the cost also increases but the variance decreases. However both these methods remain unbiased for any choice of $M_0$.

The foregoing trade-off relationships for the existing sensitivity estimation methods are summarized in Table \ref{tab1:tradeoff}.
\begin{table}[h]
\centering
\begin{tabular}{ | c | c | c | c | c | }
\hline
Type & Method   & Trade-off & Trade-off&  Preserved \\
 & $\mathcal{X}$  & quantities & parameter &  quantity \\ \hline
\multirow{2}{*}{Biased}  & CRP & \multirow{2}{*}{$\mathcal{B}( \mathcal{X} ) \  \&  \    \mathcal{V}( \mathcal{X} )$}  & \multirow{2}{*}{$h$} & \multirow{3}{*}{$\mathcal{C}( \mathcal{X} ) \approx 2 \mathcal{C}_0 $} \\ 
 & CFD &  &  & \\ \hline
\multirow{2}{*}{Unbiased} & APA & \multirow{2}{*}{$\mathcal{V}( \mathcal{X} ) \  \& \   \mathcal{C}( \mathcal{X} )$} &  \multirow{2}{*}{$M_0$} &  \multirow{2}{*}{$\mathcal{B}( \mathcal{X} ) = 0$}  \\
 & PPA &  & &  \\ \hline
\end{tabular}
\caption{Trade-off relationships among the bias $\mathcal{B}( \mathcal{X} )$, variance $\mathcal{V}( \mathcal{X} )$ and the computational cost $\mathcal{C}( \mathcal{X} )$ for existing sensitivity estimation methods. Here $h$ is the perturbation size for finite-difference schemes \cite{DA,KSR1} and $M_0$ quantifies the number of auxiliary paths for APA \cite{Our} and PPA \cite{Gupta2}. The cost of \emph{exactly} simulating the underlying process is $\mathcal{C}_0$.}
\label{tab1:tradeoff}
\end{table}

\subsection{Rationale for using tau-leap schemes for sensitivity estimation} \label{using_tau_leap_methods}

All the existing sensitivity estimation methods suffer from a critical
bottleneck -- they are all based on exact simulations of the process
$X_\theta$. The computational cost $\mathcal{C}_0$ of generating each
trajectory of $X_\theta$ can be exorbitant even for moderately large networks when those networks have some molecular species in moderately 
large copy numbers and/or reactions firing at multiple timescales ({\em stiff systems}).
One way to counter this problem is to develop methods that can accurately estimate parameter sensitivities with \emph{approximate} computationally inexpensive simulations of the process $X_\theta$ obtained with tau-leap methods. 
The use of tau-leap simulations provides a natural way to trade-off a small
amount of error with a \emph{potentially} large reduction in the computational
costs. 

The {\em explicit} tau-leap method with Poisson random
numbers proposed by Gillespie \cite{tleap1} generally works well in 
{\em non-stiff} situations and when molecular copy numbers are modestly large.  
The major drawback is that it becomes inefficient for stiff systems where
vastly different time scales are present. The {\em implicit} tau-leap 
was proposed to remedy this weakness \cite{Rathinam2003}. 
Many other tau-leap methods and step size selection strategies have been
proposed to address stiffness and other issues
 \cite{tleap2, Burrage2004, AndersonPost, Rathinam-ElSamad, Yang-Rathinam-Shen, Yang-Rathinam, Tempone2014}. 

In the context of stiff systems, tau-leap methods have not been as successful 
in maintaining accuracy while reducing computational cost in comparison with 
the success of stiff solvers for deterministic differential equations. This is because 
stiffness manifests in a more complex manner in stochastic systems where 
stability is not the only issue, but accurately capturing the asymptotic 
distribution of the fast variables is also important \cite{Rathinam2003,
  Cao-Petzold+Stability, Li-Abdulle-E, Cipcigan-Rathinam}. We shall limit our
attention to non-stiff or modestly stiff systems in this paper.    

Our goal in this paper is to develop a method that can estimate parameter sensitivity $S_\theta(f,T)$ of the form \eqref{defn_paramsens} using only tau-leap simulations of the process $X_\theta$. This can be done by specifying a random variable $s^{(\tau)}_\theta(f,T)$ which can be constructed with these tau-leap simulations and whose expected value is ``close" to the true sensitivity value $S_\theta(f,T)$, i.e.
\begin{align}
\label{gen_sens_form_tau}
S_\theta(f,T) \approx \E( s^{(\tau)}_\theta(f,T) ). 
\end{align}
We propose such a random variable $ s^{(\tau)}_\theta(f,T)$ in this paper
and provide a simple algorithm for generating the realizations of
$s^{(\tau)}_\theta(f,T) $. We theoretically show that under certain reasonable
conditions, the associated estimator is \emph{tau-convergent}, which means
that the \emph{bias} incurred due to the approximation in
\eqref{gen_sens_form_tau} converges to $0$, as the maximum step-size $\tau_{
  \textnormal{max} }$ or the \emph{coarseness} of the time-discretization mesh
goes to $0$. Hence by making this mesh finer and finer, we can make the
estimator as accurate as we desire, provided that we are willing to bear the
increasing computational costs. In the context of estimating expected values
$\E( f( X_\theta(T)  ) )$, the property of tau-convergence along with the rate
of convergence, has already been established for many tau-leap schemes
\cite{Rathinam1,Li,Gang,Rathinam3}. We use these pre-existing results and obtain a similar tau-convergence result for our sensitivity estimation method. An important feature of our approach is that it is completely flexible, as far as the choice of the tau-leap simulation method is concerned. Furthermore the order of accuracy of our sensitivity estimation method is the same as the order of accuracy of the underlying tau-leap method.  

We end this section with observing that incorporating tau-leap schemes in
sensitivity estimation opens up a new dimension in attacking this challenging
problem. In the trade-off relationships for existing sensitivity
  estimation methods (see Table \ref{tab1:tradeoff}) parameters like $h$ and
$M_0$ only allow us to explore one trade-off curve between the variance
$\mathcal{V}( \mathcal{X} )$ and some other metric like the bias $\mathcal{B}(
\mathcal{X} )$ (for $ \mathcal{X}$ = CRP, CFD) or the computational cost
$\mathcal{C}( \mathcal{X} )$ (for  $\mathcal{X}$ = APA, PPA). The main
advantage of employing tau-leap schemes is that they provide a mechanism for
exploring another trade-off curve between the bias $\mathcal{B}( \mathcal{X}
)$ and the computational cost $\mathcal{C}( \mathcal{X} )$, for the purpose of
optimizing the performance of a sensitivity estimation method. In
Section \ref{sec:num_Examples}, we provide numerical examples to show that
with tau-leap simulations we can \emph{indeed} trade-off a small amount of
bias with large savings in the computational effort required for estimating
parameter sensitivity. Moreover this trade-off relationship appears to be 
independent of existing trade-off relationships mentioned in Table
\ref{tab1:tradeoff} because replacing exact simulations in a sensitivity
estimation method, with approximate tau-leap simulations, usually does not
alter the variance $\mathcal{V}( \mathcal{X} )$ significantly at least when
the tau step size is sufficiently small (see Section \ref{sec:num_Examples}). Of course, the computational advantage of tau-leap schemes can only be appropriated if we can incorporate them into existing sensitivity estimation methods. The main contribution of this paper is to develop a method, similar to PPA, that works well with tau-leap schemes (see Section \ref{sec:mainres}). For the sake of comparison, we also provide tau-leap versions of the finite-difference schemes (CRP and CFD) in Section \ref{sec:num_Examples}.

\section{Sensitivity estimation with tau-leap simulations} \label{sec:mainres}

In this section we present our approach for accurately estimating parameter sensitivities of the form \eqref{defn_paramsens} with \emph{only} approximate tau-leap simulations of the dynamics. This approach is based on an exact \emph{integral representation} for parameter sensitivity given in Section \ref{sec:integralrepresentation}. With this representation at hand, we construct a tau-leap estimator for parameter sensitivity and examine its convergence properties as the time-discretization mesh gets finer and finer (see Sections \ref{sec:tauest1} and \ref{sec:tauest2}). Thereafter in Section \ref{sec:tbpa} we present an algorithm that computes the tau-leap estimator for sensitivity estimation. We start with the description of a generic tau-leap method that approximately simulates the stochastic reaction paths defined by the Markov process $( X(x_0,t))_{t  \geq 0 }$ with generator $\mathbb{A}$ (see \eqref{defn_gen}) and initial state $x_0$.

\subsection{A generic tau-leap method} \label{tau_leap_methods}
    For each reaction $k=1,\dots,K$, let $R_k(t)$ be the number of firings of reaction $k$ until time $t$. Due to \eqref{rtc_rep1} we can express each $R_k(t)$ as 
\begin{align*}
 R_k(t) = Y_k \left(   \int_{0}^{t}  \lambda_k( X (x_0, s) ) ds \right) \zeta_k,
\end{align*}
where $\{Y_k : k=1,\dots,K\}$ is a family of independent unit rate Poisson processes. From now on we refer to $R(t) = ( R_1(t),\dots,R_K(t))$ as the reaction count vector. For any two time values $s,t \geq 0$ (with $s<t$), the states at these times satisfy
$X (x_0, t) = X (x_0, s) +  \sum_{k=1}^K ( R_k(t) - R_k(s)  ) \zeta_k.$
 At any given time $t$ and the computed (approximate) state $x$ at time $t$, a tau-leap method entails taking either a predetermined step of size $\tau > 0$ or choosing step-size $\tau$ as a function of the current state and time, i.e. step-size selection is adapted to the information sigma-algebra generated by the tau-leap process. Next an approximating distribution for the state at time
 $(t+\tau)$ is generated. This distribution is generally found by approximating the
 difference $(R(t+\tau)-R(t))$ in the reaction count vector by a random
 variable $\tilde{R}=(\tilde{R}_1,\dots,\tilde{R}_K)$ whose probability
 distribution is easy to sample from. The most straightforward choice is given
 by the simple (explicit) Euler method \cite{tleap1}, which assumes that the
 propensities are approximately constant in the time interval $[t,t +\tau)$
   and conditioned on the information at time $t$, each  $\tilde{R}_k$ is an
   independent Poisson random variable with rate $\lambda_k(x)\tau$. Other
   distributions for $\tilde{R}=(\tilde{R}_1,\dots,\tilde{R}_K)$ have also
   been used in the literature to obtain better approximations and
   particularly to prevent the state-components from becoming negative
   \cite{Burrage2004}. The selection method for step size $\tau$ also varies,
   with the simplest being steps based on a deterministic mesh $0=t_0 < t_1
   \dots < t_n =T$ over the observation time interval $[0,T]$. To obtain
   better accuracy several strategies have been proposed that randomly select
   $\tau$ based on some criteria such as avoidance of negative
   state-components or constancy of conditional propensities
   \cite{tleap2,AndersonPost,Tempone2014}.

To represent a generic tau-leap method we shall use a pair of abstract labels $\alpha$ and $\beta$, where $\alpha$ denotes a method, i.e. a choice of distribution for $\tilde{R}$, and $\beta$ denotes a step size selection strategy. We will use $|\beta|$ as a (deterministic) parameter which quantifies the \emph{coarseness} of the time-discretization scheme $\beta$. For instance $\alpha$ may stand for the explicit Euler tau-leap method \cite{tleap1} and $\beta$ may stand for a deterministic mesh $0 = t_0 < t_1 < \dots < t_n = T$, and in this case the coarseness parameter is $|\beta|=\max (t_j-t_{j-1})$. Typically, tau-leap methods produce approximations of the underlying process at certain leap times that are separated by the step-size $\tau$ and one can 
interpolate these approximate state values at other time points. The most obvious interpolation is the ``sample and hold'' method, where the tau-leap process is held constant between the consecutive leap times. In circumstances, such as the explicit Euler tau-leap method with Poisson updates, it is more natural to use interpolation strategies based on the random time-change representation \eqref{rtc_rep1} -- for example see the ``Poisson bridge" approach in \cite{Tempone2011}. In the following discussion, we suppose that the interpolation strategy is also determined by the label $\alpha$. We shall use $( Z_{\alpha,\beta}(x_0,t) )_{t \geq 0 }$ to denote the tau-leap process, that approximates the exact dynamics $( X(x_0,t))_{t  \geq 0 }$, and that results from the application of a tau-leap method $\alpha$ with step size selection strategy $\beta$. This process is defined by the prescription $Z_{\alpha,\beta}(x_0,t_{0}) = x_0$ and
\begin{equation}\label{eq-tau-leap}
 Z_{\alpha,\beta}(x_0,t_{i+1}) = Z_{\alpha,\beta}(x_0,t_i) + \sum_{k=1}^K \zeta_k \tilde{R}_{k,i,\alpha,\beta} \quad \textnormal{for} \quad i=1,\dots,\mu,
\end{equation}
where $\mu$ is the (possibly) random number of time points, $0=t_0 < t_1 < \dots < t_\mu=T$ are the (possibly) random leap times, and $\tilde{R}_{k,i,\alpha,\beta}$ for $i=1,\dots,\mu$ and $k=1,\dots,K$ are random variables whose distribution when conditioned on $Z_{\alpha,\beta}(x,t_i)$ is determined by the method $\alpha$ and step size strategy $\beta$. 

\begin{remark}
\label{redtoSSA}
Note that this generic tau-leap method reduces to Gillespie's SSA \cite{GP}, if at state $Z_{\alpha,\beta}(x_0,t_i) = z$, the next step size $\tau$ is an exponentially distributed random variable with rate $\lambda_0(z) := \sum_{k=1}^K \lambda_k(z)$ and each $ \tilde{R}_{k,i,\alpha,\beta}$ is chosen as $1$ if $k = \eta$ and $0$ otherwise, where $\eta$ is a discrete random variable which assumes the value $i \in \{1,\dots,K\}$ with probability $(\lambda_i(z)/\lambda_0(z))$. 
\end{remark}


Later we shall establish tau-convergence of our sensitivity estimator by showing that for a fixed tau-leap method $\alpha$, the bias incurred by our estimator converges to $0$ as the coarseness $|\beta|$ of the time-discretization scheme goes to $0$. For this we shall require (weak) convergence of all moments of the tau-leap process to those of the exact process. We now state this requirement more precisely and present a simple lemma that will be needed later. For $p \geq 0$, we say that a function $f : \N_0^d \to \R$ is of class $\sC_p$ if there
exists a positive constant $C$ such that
\begin{align}
\label{polynomialgrowth}
| f(x)| \leq C( 1 + \|x\|^p )  \qquad \textnormal{for all } x \in \N^d_0.
\end{align}
We shall require that a tau-leap method $\alpha$ satisfies an order $\gamma>0$ convergent error bound. This is stated formally by Assumption 1 and it can be verified using the results in \cite{Rathinam3}.

{\bf Assumption 1}
Given a tau-leap method $\alpha$, there exist $\gamma>0$, $\delta>0$ and a mapping $\xi:\R_+ \to \R_+$ such that, for every $p \geq 0$ and every final time $T>0$, there exists a constant $C_1(p,T,\alpha)$ satisfying
\begin{equation*}
\begin{aligned}
&\sup_{t \in [0,T]}|\mathbb{E}(\|Z_{\alpha,\beta}(x_0,t)\|^p) -
\mathbb{E}(\|X(x_0,t)\|^p)|\\
\leq &\sup_{t \in [0,T]} \sum_{y \in \N_0^d} (1 + \|y\|^p)
|\P( Z_{\alpha,\beta}(x_0,t)=y) - \P(X(x_0,t)=y)|\\
\leq  &C_1(p,T,\alpha) (1 + \|x_0\|^{\xi(p)}) |\beta|^\gamma, 
\end{aligned}
\end{equation*}
for any initial state $x_0$ provided that $|\beta| \leq \delta$. Note that
here the second inequality {\em is our assumption} while the first inequality
always holds. 
In above, we have assumed that there is a common probability space $(\Omega,\P)$ carrying the exact process $X$ and the tau-leap process $Z_{\alpha,\beta}$. 
\begin{remark}
\label{rem:assumption1}
We observe that Assumption 1 essentially assumes order $O(|\beta|^\gamma)$ 
convergence in the so-called {\em $p$-th moment variation norm} (see  
\cite{Rathinam3}) of the 
probability law of $Z_{\alpha,\beta}(x_0,t)$ (on $\Z^d$) to the probability
law of $X(x_0,t)$ (on $\Z^d$) and it is not as restrictive as it might seem at
first glance.  
The $p$-th moment variation norm of a signed finite measure $\mu$ 
on $\Z^d$ which possesses a finite $p$-th moment  
is defined by
\[
\|\mu\|_p = \sum_{x \in \Z} \frac{1}{2} (1 + \|x\|^p) |\mu(x)|,
\]
and the space $\mathcal{M}_p$ defined by
\[
\mathcal{M}_p = \{\mu:\Z \to \R \, | \, \|\mu\|_p < \infty\},
\]  
is isometrically isormorphic to $\ell^1$, the space of absolutely summable sequences  and
moreover, $\sC_p$ is the dual space of $\mathcal{M}_p$ (see
\cite{Rathinam3}). We note that by the {\em Schur property}, weak convergence
implies norm convergence in $\ell^1$.
In Assumption 1, if we merely assumed weak convergence  of  
order $O(|\beta|^\gamma)$ in $\mathcal{M}_p$, due to the Schur property, we obtain convergence in $p$-th moment
variation norm of order $O(|\beta|^{\gamma'})$ for any $\gamma'\in
(0,\gamma)$. Moreover, we note that convergence of tau-leap methods in the moment variation norms 
have been derived in \cite{Rathinam3} and apply to a large class of
situations including (but not limited to) systems that remain in a bounded 
subset of the integer state space. We also remark that to our best knowledge, all convergence
results on tau-leaping have been limited to considering determinstic time
steps. However, in the applied literature, adaptive time step selection
methods have 
been explored numerically, and it is reasonable to expect convergence results to be
established in the future for a reasonable class of adaptive step size selection schemes. 
In this paper, our numerical simulations are restricted to
deterministic time steps.       
\end{remark}

Additionally we will require Assumptions 2 and 3 on moment growth bounds of the exact process as well as the tau-leap process. These assumptions can be verified using the results in \cite{Rathinam4,GuptaPLOS,Rathinam3}.\\
{\bf Assumptions 2 and 3}
Given a tau-leap method $\alpha$, there exists $\delta>0$ such that for each $T>0$ and $p \geq 0$
there exist constants $C_2(p,T)$ and $C_3(p,T,\alpha)$ satisfying
\begin{equation}\label{eq-Ass23}
\begin{aligned}
\sup_{t \in [0,T]} (1 + \E(\|X(x_0,t)\|^p)) &\leq C_2(p,T) (1 + \|x_0\|^p)\\
\textnormal{and} \qquad \sup_{t \in [0,T]} (1 + \E(\|Z_{\alpha,\beta}(x_0,t)\|^p)) &\leq C_3(p,T,\alpha) (1 + \|x_0\|^p),\\   
\end{aligned}
\end{equation}
for all $t \in [0,T]$, provided $|\beta| \leq \delta$.

We emphasize that constants $C_1$ and $C_3$ in Assumptions 1 and 3, do not depend on the step-size selection strategy $\beta$, and all the three constants in these assumptions may be assumed to be monotonic in $T$ without any loss of generality. The following lemma follows readily from the above assumptions.

\begin{lemma}\label{lem-phi-Ass123}
Consider a function $\phi:\N_0^d \times [0,T] \rightarrow \R$ and suppose that there exists a constant $C>0$ such that  $\sup_{t \in [0,T]} |\phi(x,t)| \leq C (1 + \|x\|^p)$ for all $x \in \N_0^d$. Then under Assumptions 1,2 and 3, we have
\begin{equation}
\begin{aligned}
\sup_{t \in [0,T]} |\mathbb{E}(\phi(Z_{\alpha,\beta}(x_0,t),t)) -
\mathbb{E}(\phi(X(x_0,t),t))| &\leq C C_1(p,T,\alpha) (1 + \|x_0\|^{\xi(p)}) |\beta|^\gamma,\\
\sup_{t \in [0,T]} |\E(\phi(X(x_0,t),t))| &\leq C C_2(p,T) (1 + \|x_0\|^p)\\
\textnormal{and} \qquad \sup_{t \in [0,T]} |\E(\phi(Z_{\alpha,\beta}(x_0,t),t))| &\leq C C_3(p,T,\alpha) (1 +
\|x_0\|^p),
\end{aligned}
\end{equation}
provided $|\beta| \leq \delta$.
\end{lemma}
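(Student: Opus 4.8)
The plan is to prove all three estimates by direct computation, invoking the polynomial growth hypothesis on $\phi$ together with Assumptions 1--3 in turn. In each case the strategy is the same: dominate $|\phi(x,t)|$ by $C(1+\|x\|^p)$, and then quote the appropriate moment (or moment-variation-norm) bound supplied by the assumptions. No probabilistic machinery beyond the triangle inequality and Assumptions 1--3 is needed.

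First I would dispatch the two one-sided moment bounds. For the exact process I use $|\E(\phi(X(x_0,t),t))| \leq \E(|\phi(X(x_0,t),t)|) \leq C(1+\E(\|X(x_0,t)\|^p))$, where the first step is Jensen/triangle inequality and the second is the growth hypothesis on $\phi$; Assumption 2 then bounds the right-hand side by $CC_2(p,T)(1+\|x_0\|^p)$, and taking the supremum over $t \in [0,T]$ gives the claim. The tau-leap bound is entirely analogous, with Assumption 3 in place of Assumption 2 and with the standing restriction $|\beta| \leq \delta$ carried along.

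The key step is the first (difference) estimate, which ties the result to the $p$-th moment variation norm of Assumption 1. Here I would write the difference of expectations as a single sum over the discrete state space,
\[
\E(\phi(Z_{\alpha,\beta}(x_0,t),t)) - \E(\phi(X(x_0,t),t)) = \sum_{y \in \N_0^d} \phi(y,t)\bigl[\P(Z_{\alpha,\beta}(x_0,t)=y) - \P(X(x_0,t)=y)\bigr],
\]
then move the absolute value inside the sum and dominate $|\phi(y,t)|$ by $C(1+\|y\|^p)$. This produces exactly $C$ times the quantity $\sum_{y \in \N_0^d}(1+\|y\|^p)\,|\P(Z_{\alpha,\beta}(x_0,t)=y)-\P(X(x_0,t)=y)|$, whose supremum over $t$ is bounded by the second inequality in Assumption 1. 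Factoring out $C$ and the supremum then yields the stated bound $CC_1(p,T,\alpha)(1+\|x_0\|^{\xi(p)})|\beta|^\gamma$.

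I do not expect any serious obstacle: the lemma is essentially a restatement of Assumptions 1--3 once the growth estimate on $\phi$ has reduced every left-hand side to a moment or moment-variation-norm quantity that those assumptions already control. The only points deserving a line of justification are the interchange of absolute value and summation in the difference estimate -- legitimate by absolute convergence, since $\phi$ has polynomial growth and both laws possess finite $p$-th moments by Assumptions 2 and 3 -- and the bookkeeping of the hypothesis $|\beta| \leq \delta$ on the two displays that involve the tau-leap process.
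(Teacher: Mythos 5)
Your proof is correct and coincides with the paper's intended argument: the paper simply states that the lemma ``follows readily from the above assumptions,'' and your computation --- bounding $|\phi|$ by $C(1+\|x\|^p)$ and invoking the second inequality of Assumption 1 for the difference estimate and Assumptions 2 and 3 for the two moment bounds --- is precisely the omitted verification.
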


\subsection{An integral formula for parameter sensitivity} \label{sec:integralrepresentation}
Let $(X_\theta(t))_{t \geq 0 }$ be the Markov process representing reaction dynamics with initial state $x_0$ and let $\Psi_{\theta}(x,f,t) $ be defined by
\begin{align}
\label{defdtheta}
\Psi_{\theta}(x,f,t) = \mathbb{E} \left(  f(X_\theta(t)) \middle\vert X_{\theta}(0) = x\right),
\end{align} 
for any state $x \in \N^d_0$ and time $t \geq 0$. For any $k =1,\dots,K$ and any function $ h : \N^{d}_0 \to \R$, let $\Delta_{\zeta_k}$ denote the difference operator given by
\begin{align*}
\Delta_{\zeta_k} h(x) = h(x+\zeta_k) - h(x).
\end{align*}
The following theorem expresses the sensitivity value $S_\theta(f,T)$ as the expectation of a random variable which can be computed from the paths of the process $(X_\theta(t))_{t \geq 0 }$ in the time interval $[0,T]$. The proof of this theorem is provided in the Appendix \ref{sec:appA}.

\begin{theorem}
\label{thm:main}
Suppose $(X_\theta(t))_{t \geq 0}$ is the Markov process with generator $\mathbb{A}_\theta$ and initial state $x_0$. Then the sensitivity value $S_{\theta}(f,T) $ is given by
\begin{align*}
S_{\theta}(f,T)  = \frac{ \partial }{ \partial \theta } \Psi_{\theta}(x_0,f,T) = \sum_{k=1}^K \E\left( \int_{0}^T  \frac{  \partial  \lambda_k ( X_\theta(t),\theta ) }{ \partial \theta}  
 \Delta_{\zeta_k} \Psi_{\theta}( X_\theta(t),f,T-t) dt  \right).
\end{align*}
\end{theorem}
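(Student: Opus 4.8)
The plan is to recognise $\Psi_\theta(\cdot,f,t)$ as the action of the transition semigroup of $X_\theta$ and to differentiate this semigroup with respect to $\theta$ by a Duhamel (variation-of-parameters) argument. Write $P^\theta_t f(x) := \Psi_\theta(x,f,t) = \E(f(X_\theta(t))\mid X_\theta(0)=x)$, so that $S_\theta(f,T) = \partial_\theta (P^\theta_T f)(x_0)$. The family $(P^\theta_t)_{t\ge 0}$ is a semigroup with generator $\mathbb{A}_\theta$, and the backward Kolmogorov equation gives $\frac{d}{dt}P^\theta_t = \mathbb{A}_\theta P^\theta_t = P^\theta_t \mathbb{A}_\theta$. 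Differentiating $\mathbb{A}_\theta$ in $\theta$ produces the operator $(\partial_\theta\mathbb{A}_\theta)h(x) = \sum_{k=1}^K \partial_\theta\lambda_k(x,\theta)\,\Delta_{\zeta_k}h(x)$, whose coefficients are precisely the $\partial_\theta\lambda_k$ appearing in the claimed formula; this is the structural observation that ties the derivative of the semigroup to a sum of difference operators $\Delta_{\zeta_k}$.

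Next I would fix $\theta' = \theta+\epsilon$ and consider the interpolating map $s\mapsto P^\theta_{T-s}P^{\theta'}_s f$, which equals $P^\theta_T f$ at $s=0$ and $P^{\theta'}_T f$ at $s=T$. Differentiating in $s$ and using the commutation $\mathbb{A}_\theta P^\theta_{T-s}=P^\theta_{T-s}\mathbb{A}_\theta$ collapses the two generator terms into $P^\theta_{T-s}(\mathbb{A}_{\theta'}-\mathbb{A}_\theta)P^{\theta'}_s f$; integrating over $[0,T]$ yields the exact identity $P^{\theta'}_T f - P^\theta_T f = \int_0^T P^\theta_{T-s}(\mathbb{A}_{\theta'}-\mathbb{A}_\theta)P^{\theta'}_s f\,ds$. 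Dividing by $\epsilon$ and letting $\epsilon\to 0$ replaces $(\mathbb{A}_{\theta'}-\mathbb{A}_\theta)/\epsilon$ by $\partial_\theta\mathbb{A}_\theta$ and $P^{\theta'}_s$ by $P^\theta_s$, giving $\partial_\theta P^\theta_T f = \int_0^T P^\theta_{T-s}(\partial_\theta\mathbb{A}_\theta)P^\theta_s f\,ds$. Expanding the inner operator with $P^\theta_s f(x)=\Psi_\theta(x,f,s)$, writing the outer $P^\theta_{T-s}$ as a conditional expectation over the path of $X_\theta$, and finally substituting $t=T-s$ (so $P^\theta_{T-s}$ becomes expectation at time $t$ and $P^\theta_s f$ becomes $\Psi_\theta(\cdot,f,T-t)$) produces exactly the stated representation. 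Equivalently, one may view this as solving the inhomogeneous backward equation $\partial_t w = \mathbb{A}_\theta w + g$ with $w = \partial_\theta\Psi_\theta$, $w(\cdot,0)=0$ and source $g(x,s)=\sum_k\partial_\theta\lambda_k(x,\theta)\Delta_{\zeta_k}\Psi_\theta(x,f,s)$, and applying Duhamel's principle to it.

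The main obstacle will be the analytic justification rather than the formal computation, because the state space $\N_0^d$ is unbounded and mass-action propensities (hence $\partial_\theta\lambda_k$) grow polynomially, so the operators involved are unbounded and the sums over states are infinite. I would control these using the polynomial-growth class $\sC_p$ together with the exact-process moment bound of Assumption 2 (the inequality for $X$ in \eqref{eq-Ass23}): for $f\in\sC_p$ one must first establish that $\Psi_\theta(\cdot,f,t)$, and therefore $\Delta_{\zeta_k}\Psi_\theta(\cdot,f,t)$, lie in some class $\sC_{p'}$ uniformly in $t\in[0,T]$, which guarantees that each $\E(\partial_\theta\lambda_k(X_\theta(t),\theta)\,\Delta_{\zeta_k}\Psi_\theta(X_\theta(t),f,T-t))$ is finite and integrable in $t$. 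These same growth estimates are what legitimise (i) the interchange of $\partial_\theta$ with the conditional expectation defining the semigroup, (ii) passing the limit $\epsilon\to 0$ inside the time integral, and (iii) swapping the finite $k$-sum, the expectation, and the $dt$-integral via dominated convergence and Fubini. Establishing differentiability of $\theta\mapsto P^\theta_t f$ with a derivative of controlled growth, so that the formal Duhamel identity is genuinely valid in the required function class, is the crux of the argument.
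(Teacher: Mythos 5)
Your proposal is correct in outline but follows a genuinely different route from the paper. The paper does not derive the formula from first principles: it invokes Theorem 2.3 of the earlier APA work \cite{Our}, which already expresses $S_\theta(f,T)$ as an expectation of a time integral of $\partial_\theta\lambda_k\,\Delta_{\zeta_k}f$ plus a sum over the jump times $\sigma_i$ of correction terms $R_\theta(X_\theta(\sigma_i),f,T-\sigma_i,k)$. The entire content of the paper's proof is then a probabilistic bookkeeping exercise: it computes the conditional law of the truncated holding times $\delta_i=\sigma_{i+1}\wedge T-\sigma_i\wedge T$, uses an integration-by-parts identity for integrals against the exponential density (their equation \eqref{ibpforg}) to recognize the kernel $G_\theta(y,f,t,k)=\int_0^t\Delta_{\zeta_k}\Psi_\theta(y,f,t-s)e^{-\lambda_0(y,\theta)s}ds$ as a conditional expectation of $\int_{\sigma_i\wedge T}^{\sigma_{i+1}\wedge T}\Delta_{\zeta_k}\Psi_\theta(X_\theta(\sigma_i),f,T-s)\,ds$, and reassembles the jump sum into the single time integral. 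Your Duhamel argument $P^{\theta'}_Tf-P^\theta_Tf=\int_0^T P^\theta_{T-s}(\mathbb{A}_{\theta'}-\mathbb{A}_\theta)P^{\theta'}_sf\,ds$ bypasses \cite{Our} entirely and is the more transparent and self-contained derivation — it makes clear that the $\partial_\theta\lambda_k\,\Delta_{\zeta_k}$ structure is nothing but the $\theta$-derivative of the generator. What it costs you is exactly what you identify at the end: on the unbounded lattice $\N_0^d$ with polynomially growing propensities, you must justify the backward Kolmogorov equation and the commutation $\mathbb{A}_\theta P^\theta_{T-s}=P^\theta_{T-s}\mathbb{A}_\theta$ on a class of unbounded functions, prove $\Psi_\theta(\cdot,f,t)\in\sC_{p'}$ uniformly on $[0,T]$, and establish $\theta$-differentiability (and $\theta$-continuity of $P^{\theta'}_s$ as $\epsilon\to0$) with controlled growth — none of which is free, and all of which the paper outsources to the hypotheses and proof of Theorem 2.3 in \cite{Our}. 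As a blind proof your plan is sound and arguably cleaner, but to be complete it would need those semigroup regularity lemmas carried out, whereas the paper's route needs only elementary conditioning once the earlier result is granted.
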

\begin{remark}
This formula has the following simple interpretation. Due to an infinitesimal perturbation of parameter $\theta$, the probability that the process $(X_\theta(t))_{t \geq 0}$ has an ``extra" jump at time $t$ in the direction $\zeta_k$ is proportional to 
$$  \frac{  \partial  \lambda_k ( X_\theta(t),\theta ) }{ \partial \theta}.$$
Moreover the change in the expectation of $f(X_\theta(T))$ at time $T$ due to this ``extra" jump at time $t$ is just
$$\Delta_{\zeta_k}  \Psi_{\theta}( X_\theta(t) +\zeta_k ,f,T-t).$$
The above result shows that the overall sensitivity of the expectation of $f(X_\theta(x,T))$ is just the product of these two terms, integrated over the whole time interval $[0,T]$.
\end{remark}

The rest of this section is devoted to the development of a tau-leap estimator for parameter sensitivity using this formula. To simplify our notations, we suppress the dependence on parameter $\theta$, and hence denote $ \lambda_k (\cdot,\theta)$ by $\lambda_k(\cdot)$, $\partial \lambda_k / \partial \theta$ by $\partial \lambda_k$, $S_\theta(f,T)$ by $S(f,T)$, $\Psi_\theta(x,f,t)$ by $\Psi(x,f,t)$ and the process $( X_\theta(t) )_{t \geq 0 }$ by $( X(t) )_{t \geq 0 }$. Due to Theorem \ref{thm:main} the sensitivity value $S(f,T)$ can be expressed as
\begin{align}
\label{integral_sensitivity_formula}
S(f,T)   = \sum_{k=1}^K \E\left( \int_{0}^T    \partial  \lambda_k ( X(t) )  \Delta_{\zeta_k} \Psi( X(t),f,T-t) dt  \right).
\end{align}

\subsection{Sensitivity approximation with tau-leap simulations} \label{sec:tauest1}

In order to construct a tau-leap estimator for parameter sensitivity using formula \eqref{integral_sensitivity_formula}, we need to replace both $\partial  \lambda_k ( X(t) )$ and $\Delta_{\zeta_k} \Psi( X(t),f,T-t)$ with approximations derived with tau-leap simulations. Recall from Section \ref{tau_leap_methods} that a generic tau-leap scheme can be described by a pair of abstract labels $\alpha$ and $\beta$, specifying the method and the step-size selection strategy respectively. Assuming such a tau-leap scheme is chosen, let the corresponding tau-leap process $( Z_{ \alpha,\beta }(x,t) )_{ t \geq 0}$ (see \eqref{eq-tau-leap}) be an approximation for the exact dynamics starting at state $x$.

Suppose that we use the tau-leap method $\alpha_0$ with the step-size selection strategy $\beta_0$ to approximate $X(t)$ and possibly a different tau-leap method $\alpha_1$ with a time-dependent step-size selection strategy $\beta_1(t)$ to compute an approximation of $\Delta_{\zeta_k} \Psi( X(t),f,T-t)$. This time-dependence in step-size selection is needed because the latter quantity requires simulation of \emph{auxiliary} tau-leap paths in the interval $[0,T-t]$ which varies with $t$. We discuss this in greater detail in the next section. In the following discussion, we will assume that both the tau-leap schemes $(\alpha_0,\beta_0)$ and $(\alpha_1,\beta_1(t))$ satisfy Assumptions 1,2 and 3, with common $\gamma>0, \delta>0$ and with $|\beta|$ replaced by the supremum step-size
\begin{align}
\label{defn_tmax}
\tau_{ \textnormal{max} } = \sup_{t \in [0,T]} \{|\beta_0|,|\beta_1(t)|\} 
\end{align}
which is less than $\delta$. We define the tau-leap approximation of $\Psi(x,f,t)$ (see \eqref{defdtheta}) by
\begin{equation}\label{eq-tilde-Psi}
\tilde{\Psi}_{\alpha,\beta}(x,f,t) = \mathbb{E}(f(Z_{\alpha,\beta}(x,t))),
\end{equation}  
and make the assumption that  the step size selection strategy $\beta_1(t)$ depends on $t$ in such a way that $t \mapsto \tilde{\Psi}_{\alpha_1,\beta_1(t)}(x,f,T-t)$ is a measurable function of $t$. Motivated by formula \eqref{integral_sensitivity_formula}, we shall approximate the true sensitivity value $S(f,t)$ by 
\begin{equation}\label{eq-tildeS}
\tilde{S}(f,T) = \sum_{k=1}^K  \E\left( \int_{0}^T  \partial \lambda_k( Z_{\alpha_0,\beta_0}(x_0,t))    
 \Delta_{\zeta_k} \tilde{\Psi}_{\alpha_1,\beta_1(t)}( Z_{\alpha_0,\beta_0}(x_0,t),f,T-t) dt  \right),
\end{equation}
where $x_0$ is the starting state of the process $( X (t))_{t \geq 0}$. The next theorem, proved in the Appendix \ref{sec:appA}, shows that the bias of this sensitivity approximation is similar to the bias of the underlying tau-leap scheme. In particular if the tau-leap method satisfies order $\gamma$ convergent error bound, then the same is true for the error incurred by the sensitivity approximation. Before we state the theorem, recall that for any $p \geq 0$, a function $f : \N^d_0 \to \R$ is in class $\mathcal{C}_p$ if it satisfies \eqref{polynomialgrowth} for some constant $C \geq 0$.  
\begin{theorem}\label{thm-tau-conv}
Let $f:\N_0^d \to \R$ as well as $\partial \lambda_k$ for each $k=1,\dots,K$ be of class $\sC_p$ for some $p \geq 0$. Suppose that a tau-leap approximation $\tilde{S}(f,T)$ of the exact sensitivity $S(f,T)$ is computed by \eqref{eq-tildeS}, where a tau-leap method $\alpha_0$ with step size strategy $\beta_0$ is used to approximate the underlying process $( X(t) )_{t \geq 0}$ and possibly a different tau-leap method $\alpha_1$ with 
time-dependent step size strategy $\beta_1(t)$ is used to compute approximations $\tilde{\Psi}_{\alpha_1,\beta_1(t)}(x,f,T-t)$ of $\Psi(x,f,T-t)$ at
each $t \in [0,T]$. If both the tau-leap methods satisfy Assumptions 1,2 and 3, with common $\gamma>0$ and $\delta>0$, then there exists a constant $\tilde{C}(f,T)$ such that
\[
|\tilde{S}(f,T)-S(f,T)| \leq \tilde{C}(f,T) \tau_{ \textnormal{max} }^\gamma,
\]
where $\tau_{ \textnormal{max} }$ is given by \eqref{defn_tmax} and it is less than $\delta$. 
\end{theorem}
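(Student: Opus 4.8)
The plan is to write both $S(f,T)$ and $\tilde S(f,T)$ as time integrals of expectations of a single ``integrand'' function, and then control the difference of the integrands pointwise in $t$ by a telescoping decomposition, bounding each piece with Lemma \ref{lem-phi-Ass123}, Assumption 1 and Assumption 3. Concretely, define
\[
g(x,t)=\sum_{k=1}^K\partial\lambda_k(x)\,\Delta_{\zeta_k}\Psi(x,f,T-t)
\]
together with its tau-leap analogue $\tilde g(x,t)=\sum_{k=1}^K\partial\lambda_k(x)\,\Delta_{\zeta_k}\tilde\Psi_{\alpha_1,\beta_1(t)}(x,f,T-t)$, so that by \eqref{integral_sensitivity_formula} and \eqref{eq-tildeS}, after justifying the interchange of $\E$ and $\int_0^T$ by Fubini (licensed by the moment bounds below),
\[
\tilde S(f,T)-S(f,T)=\int_0^T\Big[\E\big(\tilde g(Z(t),t)\big)-\E\big(g(X(t),t)\big)\Big]dt,
\]
where I abbreviate $Z(t)=Z_{\alpha_0,\beta_0}(x_0,t)$ and $X(t)=X(x_0,t)$. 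The first step is to verify the polynomial growth needed to feed the lemma: since $f\in\sC_p$, Assumption 2 gives $|\Psi(x,f,s)|\le C\,C_2(p,T)(1+\|x\|^p)$ uniformly in $s\in[0,T]$, and $\Delta_{\zeta_k}$ only shifts the argument by the fixed vector $\zeta_k$, so $\Delta_{\zeta_k}\Psi(\cdot,f,T-t)\in\sC_p$ uniformly in $t$; multiplying by $\partial\lambda_k\in\sC_p$ shows $g(\cdot,t)\in\sC_{2p}$ with a $t$-uniform constant.

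For the pointwise-in-$t$ estimate I would insert the intermediate term $g(Z(t),t)$ and split
\[
\E\big(\tilde g(Z(t),t)\big)-\E\big(g(X(t),t)\big)=\underbrace{\E\big(\tilde g(Z(t),t)-g(Z(t),t)\big)}_{\text{(A)}}+\underbrace{\E\big(g(Z(t),t)\big)-\E\big(g(X(t),t)\big)}_{\text{(B)}}.
\]
Term (B) is exactly the setting of Lemma \ref{lem-phi-Ass123} applied to $\phi=g$ with the outer tau-leap method $(\alpha_0,\beta_0)$; since $g(\cdot,t)\in\sC_{2p}$ uniformly, the lemma yields $|(\text{B})|\le C\,C_1(2p,T,\alpha_0)(1+\|x_0\|^{\xi(2p)})\,|\beta_0|^\gamma\le C'\,\tau_{\textnormal{max}}^\gamma$ uniformly in $t$. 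For term (A) the point is that $\tilde g-g$ involves only the $\Psi$-approximation error: applying Assumption 1 to the method $\alpha_1$ with initial state $x$ and horizon $T-t$ (and $f\in\sC_p$) gives
\[
\big|\tilde\Psi_{\alpha_1,\beta_1(t)}(x,f,T-t)-\Psi(x,f,T-t)\big|\le C\,C_1(p,T-t,\alpha_1)(1+\|x\|^{\xi(p)})\,|\beta_1(t)|^\gamma.
\]
Using monotonicity of $C_1$ in $T$ and $|\beta_1(t)|\le\tau_{\textnormal{max}}$, together with shift-stability of polynomial growth under $\Delta_{\zeta_k}$ and $|\partial\lambda_k(x)|\le C(1+\|x\|^p)$, this produces a $t$-uniform pointwise bound $|\tilde g(x,t)-g(x,t)|\le C''(1+\|x\|^{p+\xi(p)})\,\tau_{\textnormal{max}}^\gamma$. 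Taking expectations over $Z(t)$ and invoking the moment bound of Assumption 3 (through the last inequality of Lemma \ref{lem-phi-Ass123}, with $\phi(x)=1+\|x\|^{p+\xi(p)}$) controls $\E(1+\|Z(t)\|^{p+\xi(p)})$ uniformly in $t\in[0,T]$, so $|(\text{A})|\le C'''\,\tau_{\textnormal{max}}^\gamma$ uniformly in $t$.

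Combining (A) and (B) gives $|\E(\tilde g(Z(t),t))-\E(g(X(t),t))|\le(C'+C''')\,\tau_{\textnormal{max}}^\gamma$ for every $t\in[0,T]$, and integrating over $[0,T]$ yields the claim with $\tilde C(f,T)=T(C'+C''')$. I expect the main obstacle to be bookkeeping rather than conceptual: one must track the growth exponents faithfully, noting that the $\Psi$-error sits in $\sC_{\xi(p)}$ while the prefactor $\partial\lambda_k$ sits in $\sC_p$, so that (A) needs the $(p+\xi(p))$-th moment of the outer tau-leap process whereas (B) needs the $2p$-th moment of $g$. The second delicate point is ensuring \emph{every} constant is uniform in $t$ despite the time-dependent step-size strategy $\beta_1(t)$; this uniformity is precisely what the monotonicity of the constants in $T$, the bound $|\beta_1(t)|\le\tau_{\textnormal{max}}$, and the $\beta$-independence of $C_1,C_3$ (noted after Assumptions 2 and 3) are designed to supply, while the measurability of $t\mapsto\tilde\Psi_{\alpha_1,\beta_1(t)}(x,f,T-t)$ assumed before \eqref{eq-tildeS} guarantees the integral defining (A) is well posed.
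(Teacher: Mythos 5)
Your proof is correct and follows essentially the same route as the paper's: both arguments reduce $\tilde S(f,T)-S(f,T)$ to a time integral of a difference of expectations, insert one intermediate term to telescope into an ``outer-process error'' piece and an ``inner $\Psi$-approximation error'' piece, and bound each via Lemma \ref{lem-phi-Ass123} together with Assumptions 1--3 and the monotonicity/uniformity of the constants in $T$ and $\beta$. The only (immaterial) difference is the choice of pivot: you insert $g(Z_{\alpha_0,\beta_0}(x_0,t),t)$ (exact integrand at the tau-leap process), whereas the paper inserts $h_k(X(x_0,t),t)$ (tau-leap integrand at the exact process), which merely swaps the roles of the moment bounds $C_2$ and $C_3$ in the two cross terms and leads to the same $O(\tau_{\textnormal{max}}^\gamma)$ bound.
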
  


We remark that there are two forms of error
  analyses in the literature for tau-leap
methods. The first type is more conventional where the analysis is carried out
for a given system in an interval $[0,T]$ as $\tau_{\max} \to 0$. See
\cite{Rathinam1,Li, Rathinam3}. An alternative analysis considers 
a family of systems parametrized by  
``system size'' $V$, where step size $\tau$ is chosen in relation to $V$ as
$\tau = V^{-\beta}$ (where $\beta>0$), and the limit considered as $V \to
\infty$ \cite{Gang}. As pointed out in \cite{Rathinam3} both analyses are
useful. The first type of analysis with fixed system size is important 
in that if convergence or more importantly zero-stability (see \cite{Rathinam3}) does not
hold in this conventional sense, then the computed solution can be very erroneous not only when the step size $\tau$ is too large,
but also when it is too small!  On the other hand, the system size scaling
analysis helps explains why tau-leap remains efficient while leaping
over several reaction events. In the interest of space, we limit ourselves to
the first type in this paper.

\subsection{A tau-leap estimator for parameter sensitivity} \label{sec:tauest2}

We now come to the problem of estimating the sensitivity approximation $\tilde{S}(f,T)$ using tau-leap simulations. Expression \eqref{eq-tildeS} shows that $\tilde{S}(f,T)$ is the expectation of the random variable $\bar{s}(f,T)$ defined by 
\begin{align}
\label{ineffrv}
\bar{s}(f,T)  =  \sum_{k=1}^K \int_{0}^T  \partial \lambda_k(
Z_{\alpha_0,\beta_0}(x_0,t))  \Delta_{\zeta_k} \tilde{\Psi}_{\alpha_1,\beta_1(t)}(Z_{\alpha_0,\beta_0}(x_0,t),f,T-t) dt.
\end{align} 
If we can generate samples of this random variable, then the estimation of $\tilde{S}(f,T)$ would be quite straightforward using \eqref{defn_empirical_mean}. However this is not the case as the random variable $\bar{s}(f,T)$ is nearly impossible to generate. This is mainly because it requires computing quantities of the form
\begin{align}
\label{formofunknownquantities}
\Delta_{\zeta_k} \tilde{\Psi}_{\alpha_1,\beta_1(t)}(Z_{\alpha_0,\beta_0}(x_0,t),f,T-t)
\end{align}
at infinitely many time points $t$. These quantities generally do not have an explicit formula and hence they need to be estimated via auxiliary Monte Carlo simulations, which severely restricts the number of such quantities that can be feasibly estimated. We tackle these problems by constructing another random variable $\tilde{s}(f,T)$ whose expected value equals $\tilde{S}(f,T)$, and whose samples can be easily generated using a simple procedure called $\tau$IPA (Tau Integral Path Algorithm) that is described in Section \ref{sec:tbpa}. This random variable is constructed by \emph{adding randomness} to the random variable $\bar{s}(f,T)$ in such a way that only a small finite number of unknown quantities of the form \eqref{formofunknownquantities} require estimation. We now present this construction. \\ \\
\noindent
 {\bf Construction of the random variable $\tilde{s}(f,T)$:} Recall from Section \ref{tau_leap_methods} the description of the tau-leap process $(Z_{\alpha_0,\beta_0}(x_0,t) )_{t \geq 0 }$ which approximates the exact dyamics $( X(t) )_{t \geq 0}$. Let $0=t_0 < t_1 < \dots < t_\mu=T$ be the (possibly random) mesh corresponding to step size selection strategy $\beta_0$. We denote the $\sigma$-algebra generated by the process $( Z_{\alpha_0,\beta_0}(x_0,t) )_{t \geq 0}$ and the random mesh $\beta_0$ over the interval $[0,T]$ by $\sF_T$. Let $\tau_i = t_{i+1}-t_i$ and let $\eta_i$ be the positive integer given by
\begin{align}
\label{defn_etai}
\eta_i =  \max\left\{  \left \lceil \frac{  \sum_{k=1}^K  | \partial  \lambda_k ( Z_{\alpha_0,\beta_0}(x_0, t_i)) | \tau_i  }{ C } \right \rceil , 1 \right\}, 
\end{align}
where $C$ is a positive constant and $\lceil x \rceil$ denotes the smallest integer greater than or equal to $x$. The choice of $C$ and its role will be explained later in the section. Define $\sigma_{ij} := t_i + u_{ij} \tau_i $ for each $j=1,\dots,\eta_i$, where each $u_{ij}$ is an independent random variable with distribution $\textnormal{Uniform}[0,1]$. Thus given $t_i$ and $t_{i+1}$, the distribution of each $\sigma_{ij} $ is $\textnormal{Uniform}[t_i, t_{i+1} ]$. Moreover taking expectation over the distribution of $u_{ij}$-s we get
\begin{align*}
&\E\left( \frac{\tau_i }{ \eta_i }  \sum_{j=1}^{ \eta_i } \partial  \lambda_k ( Z_{\alpha_0,\beta_0}(x_0, \sigma_{ij}  )) \,
\Delta_{\zeta_k} \tilde{\Psi}_{\alpha_1,\beta_1(\sigma_{ij})}(Z_{\alpha_0,\beta_0}(x_0, \sigma_{ij}  ),f,  T - \sigma_{ij})    \middle\vert   \mathcal{F}_T \right) \\
  &=  \int_{ t_i  }^{ t_{i+1} } \partial  \lambda_k (
Z_{\alpha_0,\beta_0}(x_0,t)) \, \Delta_{\zeta_k}\tilde{\Psi}_{\alpha_1,\beta_1(t)}(Z_{\alpha_0,\beta_0}(x_0,t),f,T-t)  dt. \notag
\end{align*}
In deriving the last equality we have used the substitution $t = t_i + u \tau_i$. This relation along with \eqref{eq-tildeS} yields
\begin{align}
  \label{defn_sthetanft2}
& \tilde{S}(f,T)  =  \sum_{k=1}^K \E\left(  \sum_{i=0}^{\mu-1}  \int_{ t_i  }^{ t_{i+1} }
\partial  \lambda_k ( Z_{\alpha_0,\beta_0}(x_0,t)) \, \Delta_{\zeta_k}\tilde{\Psi}_{\alpha_1,\beta_1(t)}(Z_{\alpha_0,\beta_0}(x_0,t),f,T-t)  dt   \right) \\
 & = \sum_{k=1}^K  \E\left( \sum_{i=0}^{\mu-1}  \sum_{j=1}^{ \eta_i }  \frac{ \tau_i  }{ \eta_i }\partial  \lambda_k (
Z_{\alpha_0,\beta_0}(x_0, \sigma_{ij})) \, \Delta_{\zeta_k}\tilde{\Psi}_{\alpha_1,\beta_1(\sigma_{ij} )}(Z_{\alpha_0,\beta_0}(x_0, \sigma_{ij} ),f, T -  \sigma_{ij} )  \right) \notag
\end{align}
using linearity of the expectation operator. To obtain the states $Z_{\alpha_0,\beta_0}(x_0, \sigma_{ij} )$ for all the $\sigma_{ij}$-s, we need to interpolate the tau-leap dynamics between the times $t_i$ and $t_{i+1}$.

To proceed further we define a ``conditional estimator'' $\hat{D}_{kij}$ of the quantity \eqref{formofunknownquantities} at $t = \sigma_{ij}$ by 
\begin{equation}\label{eq-Dki}
\hat{D}_{kij} =
f(Z^{1kij}_{\alpha_1,\beta_1(\sigma_{ij} )}(z+\zeta_k,T-\sigma_{ij})) - 
f(Z^{2kij}_{\alpha_1,\beta_1(\sigma_{ij}  )}(z,T-\sigma_{ij}))
\end{equation}
where $z = Z_{\alpha_0,\beta_0}(x_0,\sigma_{ij})$, and $Z^{1kij}$ and $Z^{2kij}$ are instances of tau-leap approximations of the exact dynamics starting at initial states $(z+\zeta_k)$ and $z$ respectively. Both these tau-leap processes use the same method $\alpha_1$ and the same step-size selection strategy $\beta_1(\sigma_{ij})$. Moreover conditioned on $Z_{\alpha_0,\beta_0}(x_0,\sigma_{ij})$ and $\sigma_{ij}$, the processes $Z^{1kij},Z^{2kij}$ and the step-size selection strategy $\beta_1(\sigma_{ij})$ are independent of the process $Z_{\alpha_0,\beta_0}$ and the step-size selection strategy $\beta_0$. Therefore it is immediate that 
\begin{equation}\label{diff_estimation}
\E(\hat{D}_{kij}  \vert Z_{\alpha_0,\beta_0}(x_0,\sigma_{ij}), \sigma_{ij}) = 
\Delta_{\zeta_k}\tilde{\Psi}_{\alpha_1,\beta_1(\sigma_{ij} )}(Z_{\alpha_0,\beta_0}(x_0, \sigma_{ij} ),f, T -  \sigma_{ij} ),
\end{equation}
and hence from \eqref{defn_sthetanft2} we obtain the following representation for $\tilde{S}(f,T)$
\begin{align}
\label{defn_sthetanft3}
\tilde{S}(f,T) =  \sum_{k=1}^K  \E\left( \sum_{i=0}^{\mu-1}  \sum_{j=1}^{ \eta_i } \frac{ \tau_i  }{ \eta_i} \partial  \lambda_k (Z_{\alpha_0,\beta_0}(x_0, \sigma_{ij} ))  \hat{D}_{kij}      \right).
\end{align}
An estimator for $\tilde{S}(f,T) $ based on this formula can require several computations of $\hat{D}_{kij}$. Since each evaluation of $\hat{D}_{kij}$ is computationally expensive, we would like to control the total number of these evaluations by randomizing the decision of whether $\hat{D}_{kij}$ should be evaluated at time $\sigma_{ij}$ or not. Moreover this randomization must be performed without introducing a bias in the estimator. We now describe this process.

Define $R_{kij}$ and $P_{kij}$ by
\begin{align}
\label{defn_rki_rhoki}
R_{kij} = \partial  \lambda_k ( Z_{\alpha_0,\beta_0}(x_0, \sigma_{ij}  )) \tau_i  \qquad \textnormal{and} \qquad
 P_{kij} =\left(  \frac{  \left|  R_{kij}  \right| } { C \eta_i } \right) \wedge 1, 
\end{align} 
and let $\rho_{kij}$ be an independent $\{0,1\}$-valued random variable whose distribution is Bernoulli with parameter $ P_{kij}$. Since $ \E\left(    \rho_{kij}  \middle\vert  Z_{\alpha_0,\beta_0}(x_0, \sigma_{ij}  ), \sF_T   \right) = P_{kij}$ we have that
\begin{align}
\label{defn_sthetanft4}
\tilde{S}(f,T) =  \sum_{k=1}^K  \E\left( \sum_{i=0}^{\mu-1}   \sum_{j=1}^{ \eta_i } \left( \frac{  R_{kij }    }{ P_{kij }  \eta_i} \right) \rho_{kij} \hat{D}_{kij}  \right),
\end{align}
where we define $R_{kij }/ P_{kij }$ to be $0$ when $R_{kij } = 0$. This formula suggests that $\tilde{S}(f,T) $ can be estimated, without any bias, using realizations of the random variable
\begin{align}
\label{expr:sthetahat}
\tilde{s}(f,T)& = \sum_{k=1}^K \sum_{i = 0}^{ \mu - 1 }   \sum_{j=1}^{ \eta_i } \left( \frac{  R_{kij }    }{ P_{kij }  \eta_i} \right)  \rho_{kij} \hat{D}_{kij} .
\end{align}

In generating each realization of $\tilde{s}(f,T)$, the computation of $\hat{D}_{kij}$ is only needed if the Bernoulli random variable $\rho_{kij}$ is $1$. Therefore, if we can effectively control the number of such $\rho_{kij}$-s then we can efficiently generate realizations of $\tilde{s}(f,T)$. This can be achieved using the positive parameter $C$ (see \eqref{defn_etai} and \eqref{defn_rki_rhoki}) as we soon explain. Based on the construction outlined above, we provide a method in Section \ref{sec:tbpa} for obtaining realizations of the random variable $\tilde{s}(f,T)$. We call this method, the \emph{Tau Integral Path Algorithm} ($\tau$IPA), to emphasize the fact that $\tilde{s}(f,T)$ is essentially an approximation of the integral \eqref{ineffrv}. Using $\tau$IPA we can efficiently generate realizations $s_1,s_2,\dots,s_N$ of $\tilde{s}(f,T)$ and approximately estimate the parameter sensitivity $\tilde{S}(f,T)$ with the estimator \eqref{defn_empirical_mean}.\\ \\
\noindent
 {\bf Minimizing the variance of $\tilde{s}(f,T)$:} 
To improve the efficiency of $\tau$IPA, we must minimize the additional variance due to the extra randomness that has been added to the random variable $\bar{s}(f,T)$ \eqref{ineffrv} to obtain $\tilde{s}(f,T)$. Since $\E(\tilde{s}(f,T) \vert \mathcal{F}_T) = \bar{s}(f,T)$, this additional variance is equal to $\textnormal{Var}(\tilde{s}(f,T) \vert \mathcal{F}_T)$, and in order to reduce this quantity we focus on reducing the 
conditional variance $\text{Var}(\hat{D}_{kij} \vert \mathcal{F}_T)$. Recall that $\hat{D}_{kij}$ is given by \eqref{eq-Dki} and for convenience we abbreviate $Z^{lkij}_{\alpha_1,\beta_1(\sigma_{ij})}$ by $Z^l$ for $l=1,2$. The reduction in this conditional variance can be accomplished by tightly coupling the pair of processes $(Z^{1},Z^{2})$. For this purpose we use the split-coupling (see \cite{DA}) specified by {\small
 \begin{align}
 \label{splitcoupling2_1}
Z^1(t) &=( Z_{\alpha_0,\beta_0}(x_0, \sigma_{ij}  )+ \zeta_k) + \sum_{k = 1}^K Y_k\left( \int_{0}^{t}  \lambda_k( Z^1( \alpha(s) ) ,\theta) \wedge \lambda_k( Z^2(\alpha(s) ) ,\theta) ds \right)\zeta_k \\
&+  \sum_{k = 1}^K Y^{(1)}_k\left( \int_{0}^{t} \left(\lambda_k ( Z^1(\alpha((s) ) ,\theta) -  \lambda_k ( Z^1( \alpha(s)  ) ,\theta) \wedge \lambda_k( Z^2( \alpha(s)  ) ,\theta) \right) ds \right) \zeta_k \notag  \\  
 \label{splitcoupling2_2} 
Z^2(t) &=Z_{\alpha_0,\beta_0}(x_0, \sigma_{ij}   )+ \sum_{k = 1}^K Y_k\left( \int_{0}^{t} \lambda_k( Z^1( \alpha(s)  ) ,\theta) \wedge \lambda_k(Z^2( \alpha(s)  ) ,\theta) ds \right)\zeta_k  \\
&+  \sum_{k = 1}^K Y^{(2)}_k\left( \int_{0}^{t} \left(  \lambda_k( Z^2( \alpha((s) ) ,\theta) -  \lambda_k( Z^1( \alpha(s)  ) ,\theta) \wedge \lambda_k( Z^2( \alpha(s)  ) ,\theta) \right) ds \right) \zeta_k,  \notag
\end{align} }
where $\{Y_k, Y^{(1)}_k,Y^{(2)}_k : k =1,\dots,K\}$ is an independent family
of unit rate Poisson processes. Here $\alpha(s) = t_i$ for $t_i \leq s <
t_{i+1}$, and $\{ t_0,t_1,t_2,\dots\}$ is the sequence of leap-times of the
pair of processes $(Z^{1},Z^{2})$ jointly simulated with the tau-leap scheme
$( \alpha_1,\beta_1(t) )$.  Note that process $\alpha$ is adapted to the
  filtration generated by processes $(Z^{1},Z^{2})$. Hence a solution to
  \eqref{splitcoupling2_1}-\eqref{splitcoupling2_2} can be found by explicit
  construction. The uniqueness of the solution $(Z^{1},Z^{2})$, until the
  first time $\tau_M$ its norm exceeds some constant $M > 0$, is guaranteed by
  the local boundedness of the associated generator (see Theorem 4.1 in Chapter 4 of \cite{EK}). Using Assumption 3 one can show that as $M \to \infty$ we have $\tau_M \to \infty$ a.s.\ and from this, the uniqueness of the solution $(Z^{1},Z^{2})$ in the whole time-interval $[0,\infty)$ can be established. See Lemma A.1 in \cite{gupta2014sensitivity} for more details on this argument.\\ \\
\noindent
 {\bf Controlling the number of nonzero $ \rho_{kij}$-s:} We now discuss how the positive parameter $C$ can be selected to control the total number of $\rho_{kij}$-s that assume the value $1$ in \eqref{expr:sthetahat}, which is $\rho_{ \textnormal{tot} } = \sum_{k=1}^K \sum_{i=1}^{\mu -1 } \sum_{j=1}^{ \eta_i } \rho_{kij}$. This is the number of $\hat{D}_{kij}$-s that are required to obtain a realization of $\tilde{s}(f,T)$. It is immediate that given the sigma field $\mathcal{F}_{T}$, $\rho_{ \textnormal{tot} }$ is a $\N_0$-valued random variable whose expectation is given by:
\begin{align*}
\E(\rho_{ \textnormal{tot} }  \vert  \mathcal{F}_{T}  ) = \sum_{k=1}^K \sum_{i=1}^{\mu -1 }\sum_{j=1}^{ \eta_i } \E( P_{kij}  \vert  \mathcal{F}_{T}  )  = \sum_{k=1}^K \sum_{i=1}^{\mu -1 } \sum_{j=1}^{ \eta_i }  \E\left[ \left(  \frac{  | R_{kij} |   } { C \eta_i} \right) \wedge 1 \middle\vert  \mathcal{F}_{T}  \right].
\end{align*}
Using $a \wedge b \leq a$ and
\begin{align*}
 \E\left( | R_{kij} |    \vert  \mathcal{F}_{T}  \right)  = \int_{t_i}^{t_{i+1}}  \left| \partial \lambda_k (Z_{\alpha_0,\beta_0}(x_0,t))  \right| dt
\end{align*}
we obtain
\begin{align}
\label{almost_exact_ineq}
\E\left( \rho_{ \textnormal{tot} }  \right)  = \E\left(\E(\rho_{ \textnormal{tot} }  \vert  \mathcal{F}_{T}  ) \right)  \leq   \frac{1}{C}  \sum_{k=1}^K  \E\left(  \int_{0}^T \left| \partial \lambda_k (Z_{\alpha_0,\beta_0}(x_0,t))  \right| dt   \right).
\end{align}
We choose a positive integer $M_0$ and set 
\begin{align}
\label{normconstant}
C =  \frac{1}{M_0}  \sum_{k=1}^K  \E\left(  \int_{0}^T \left| \partial \lambda_k (Z_{\alpha_0,\beta_0}(x_0,t))  \right| dt   \right),
\end{align}
where the expectation can be approximately estimated using $N_0$ tau-leap simulations of the dynamics in the time interval $[0,T]$. Such a choice ensures that $\rho_{ \textnormal{tot} } $ is bounded above by $M_0$ on average. In most cases we can expect that $R_{kij}$ to be close to $ \partial  \lambda_k ( Z_{\alpha_0,\beta_0}(x_0, t_i  )) \tau_i $ and so the choice of $\eta_i$ automatically ensures that $ | R_{kij} |  \leq C \eta_i$. Hence inequality \eqref{almost_exact_ineq} is almost exact and with $C$ chosen as \eqref{normconstant} we have $\E\left( \rho_{ \textnormal{tot} }  \right)   \approx M_0$. Therefore $M_0$ can be interpreted as the expected number of coupled auxiliary paths \eqref{splitcoupling2_1}-\eqref{splitcoupling2_2} needed to obtain a realization of $\tilde{s}(f,T)$. This parameter is in the hands of the user and it plays the same role as in PPA (see Section \ref{subsec:unbiasedmethods}), namely, it allows one to select the trade-off between the computational cost $\mathcal{C}( \tau \textnormal{IPA} )$ and the variance $\mathcal{V}(\tau \textnormal{IPA}) $. A higher value of $M_0$ reduces the variance while simultaneously increasing the computational cost. Hence it is difficult to ascertain the effect of $M_0$ on the overall estimation cost which depends on the product $\mathcal{C}( \tau \textnormal{IPA} ) \mathcal{V}(\tau \textnormal{IPA})$ (see \eqref{overallcomputationcost}). Numerical examples suggest that for low values of $M_0$, the overall estimation cost decreases gradually with increase in $M_0$, but this trend reverses for higher values of $M_0$ (see Section \ref{sec:num_Examples}). More work is needed to examine if this pattern persists more generally and how one can select the optimal value of $M_0$. Note however that $\tau$IPA will provide an unbiased estimator for $\tilde{S}(f,T)$ \eqref{eq-tildeS} regardless of the choice of $M_0$. Hence the accuracy of $\tau$IPA does not vary much with $M_0$, which is also seen in the numerical examples.

\subsection{The Tau Integral Path Algorithm ($\tau$IPA)} \label{sec:tbpa}

We now provide a detailed description of the method $\tau$IPA which produces realizations of the random variable $\tilde{s}(f,T)$ defined by \eqref{expr:sthetahat}. Computing the empirical mean \eqref{defn_empirical_mean} of these realizations estimates the approximate parameter sensitivity $\tilde{S}(f,T)$. Throughout this section we assume that the function $rand()$ returns independent samples from the distribution $\textnormal{Uniform}[0,1]$.

The method $\tau$IPA can be adapted to work with any tau-leap scheme, but for concreteness, we assume that an \emph{explicit} tau-leap scheme is used for all the simulations. This means that the current state $z$ and time $t$, are sufficient to determine the distributions of the next time-step $\tau$ and the vector of reaction firings $\tilde{R} = ( \tilde{R}_1,\dots, \tilde{R}_K)$ in the time interval $[t, t+\tau)$. We suppose that a sample from these two distributions can be obtained using the methods $\Call{GetTau}{z,t,T}$\footnote{We allow the step-size selection to depend on both the current time $t$ and the final time $T$. This is especially important for simulating the auxiliary paths that are required to compute the $\hat{D}_{kij}$-s in \eqref{expr:sthetahat} (see Sections \ref{sec:tauest1} and \ref{sec:tauest2}).} and $\Call{GetReactionFirings}{z,\tau}$ respectively. If we use the simplest tau-leap scheme given in \cite{tleap1}, then reaction firings can be generated as
\begin{align}
\label{poissreactionfirings}
\tilde{R}_k = \Call{Poisson}{  \lambda_k(z) \tau},
\end{align}
for $k=1,\dots,K$, where the function $\Call{Poisson}{r}$ generates an independent Poisson random variable with mean $r$. Once we have the reaction firings $\tilde{R} = ( \tilde{R}_1,\dots, \tilde{R}_K)$, the state at time $(t+ \tau)$ is given by $z' = (z + \sum_{k=1}^K  \tilde{R}_k \zeta_k)$ and for any intermediate time-point $\sigma \in (t, t+\tau)$ the state $\hat{z}$ can be obtained using the ``Poisson bridge" interpolation (see \cite{Tempone2011}). However this interpolation approach is equivalent to setting $\hat{z} = (z + \sum_{k=1}^K  \tilde{R}^{(1)}_k \zeta_k ) $ and  $z'= ( \hat{z}  + \sum_{k=1}^K  \tilde{R}^{(2)}_k \zeta_k)$, where $ \tilde{R}^{(1)}= (  \tilde{R}^{(1)}_1,\dots,  \tilde{R}^{(1)}_K)$ and $ \tilde{R}^{(2)} = (  \tilde{R}^{(2)}_1,\dots, \tilde{R}^{(2)}_K)$ are reaction firing vectors generated according to \eqref{poissreactionfirings} with $\tau$ replaced by $(\sigma -t)$ and $(t+ \tau - \sigma)$ respectively. This idea can be easily generalized to obtain the interpolated states $\hat{z}_1,\dots,\hat{z}_\eta$ at $\eta$ intermediate times $\sigma_1,\dots,\sigma_\eta \in (t, t+\tau)$ sorted in ascending order, i.e. $\sigma_1 <\dots < \sigma_\eta$.

Let $Z$ denote the tau-leap process approximating the reaction dynamics with initial state $x_0$. Our first task is to select the normalization parameter $C$ according to \eqref{normconstant}, by estimating the expectation in the formula using $N_0$ simulations of the process $Z$. This is done using the function \\$\Call{Select-Normalizing-Constant}{x_0,M_0, T}$ (see Algorithm \ref{estimatenormalization} in Appendix \ref{sec:appB}) where $M_0$ is the expected number of auxiliary paths \eqref{splitcoupling2_1}-\eqref{splitcoupling2_2} that need to be simulated (see Section \ref{sec:tauest2}). Once $C$ is chosen, a single realization of $\tilde{s}(f,T)$ can be computed using $\Call{GenerateSample}{x_0,T,C}$ (Algorithm \ref{gensensvalue}). This method simulates the tau-leap process $Z$ and at 
each leap-time $t_i$, the following happens:
\begin{enumerate}
\item The next leap size $\tau_i$ ($=\tau$) is chosen and the positive integer $\eta_i$ ($=\eta$) is computed.
\item The intermediate time-points $\sigma_j$-s are generated for $j=1,\dots,\eta$ and sorted in ascending order.
\item For each $j$, the vector of reaction firings $\tilde{R} = ( \tilde{R}_1,\dots, \tilde{R}_K)$ for the time-interval $(\sigma_{j-1},\sigma_j)$ is computed and the interpolated state $\hat{z}_j$ at time $\sigma_j$ is evaluated. Then for each reaction $k$ the following happens:
\begin{itemize}
\item The variables $R_{kij}$ ($=R$), $P_{kij}$ ($=P$) and $\rho_{kij}$ ($=\rho$) are generated. The function $\Call{Bernoulli}{P}$ generates an independent Bernoulli random variable with expectation $P$.
\item If $\rho_{kij}= 1$ then $\hat{D}_{kij}$ (see \eqref{eq-Dki}) is evaluated using \\$\Call{EvaluateCoupledDifference}{\hat{z}_j,\hat{z}_j+\zeta_k,\sigma,T}$ (see Algorithm \ref{gendiffsample} in \\ Appendix \ref{sec:appB}) and the sample value is updated according to \eqref{expr:sthetahat}. This method independently simulates the pair of processes $(Z^{1},Z^{2})$ specified by the split-coupling  \eqref{splitcoupling2_1}-\eqref{splitcoupling2_2} in order to compute $\hat{D}_{kij}$. For simplicity we assume that these simulations are carried out by the same tau-leap scheme which generates reaction firings according to \eqref{poissreactionfirings}.  
\end{itemize}
\item Finally, time $t$ is updated to $(t +\tau)$, reaction firings for the time-interval  $[ \sigma_\eta, t)$ are computed and the state is updated accordingly. 
\end{enumerate}
Note that in the computation of reaction firings the propensities are evaluated at $z$ rather than any of the interpolated states $\hat{z}_j$.

\begin{algorithm}[h]
\caption{Generates one realization of $\tilde{s}(f,T)$ according to \eqref{expr:sthetahat} }
\label{gensensvalue}     
\begin{algorithmic}[1]
\Function{GenerateSample}{$x_0,T,C$}
    \State Set $z = x_0$, $t = 0$ and $s = 0$
\While {$ t <  T $} 
\State  Calculate $\tau= \Call{GetTau}{z,t,T}$ and set 
\begin{align*}
\eta =  \max\left\{  \left \lceil \frac{  \sum_{k=1}^K  | \partial  \lambda_k (z) | \tau  }{ C } \right \rceil , 1 \right\}. 
\end{align*}
\State For each $j=1,\dots,\eta$ let $ \sigma_j  \gets (  t + rand() \times \tau)$. Relabel $\sigma_j$-s to arrange them in ascending order as $\sigma_1 < \sigma_2 <\dots \sigma_\eta$. Also set $\sigma_0 = t$ and $\hat{z}_0 = z$. 
\For {$j = 1$ to $ \eta$}	
\State Set $( \tilde{R}_1,\dots, \tilde{R}_K) = \Call{GetReactionFirings}{z, \sigma_j - \sigma_{j-1} }$ and compute the interpolated state $\hat{z}_j = \hat{z}_{j-1}  + \sum_{k=1}^K  \tilde{R}_k \zeta_k$.
\For {$k = 1$ to $K$}	
\State Set $R =\partial \lambda_k(\hat{z}_j)  \tau$ and $ \rho =\Call{Bernoulli}{P}$ with
\begin{align*}
P = \left(  \frac{  \left|  R  \right| } { C \eta } \right) \wedge 1.
\end{align*}
\If {$\rho = 1$}
\State \noindent Update $s \gets s +\left( \frac{  R}{P \eta }  \right) \Call{EvaluateCoupledDifference}{\hat{z}_j,\hat{z}_j+ \zeta_k,\sigma_j, T} $
\EndIf
\EndFor
\EndFor
\State Update $t \gets t +\tau$
\State Set $( \tilde{R}_1,\dots, \tilde{R}_K) = \Call{GetReactionFirings}{z, t- \sigma_\eta }$
\State Update $z \gets \hat{z}_\eta  + \sum_{k=1}^K  \tilde{R}_k \zeta_k$
\EndWhile 
\State \Return $s$
\EndFunction
\end{algorithmic}
\end{algorithm}

\section{Numerical Examples}
\label{sec:num_Examples}
  
In this section we computationally compare six sensitivity estimation methods on many examples. The methods we consider are the following:
\begin{enumerate}
\item {\bf Tau Integral Path Algorithm} or {\bf $\tau$IPA}: This is the method described in Section \ref{sec:tbpa}. The tau-leap scheme we use is the simple Euler method \cite{tleap1} with Poisson reaction firings \eqref{poissreactionfirings} and uniform step-size $\tau =\tau_{ \textnormal{max} }$. To avoid the possibility of \emph{leaping-over} the final time $T$ at which the sensitivity is to be estimated, we set
\begin{align*}
\Call{GetTau}{z,t,T} = \min\{\tau_{ \textnormal{max} }, T- t\}.
\end{align*} 
The value of $\tau_{ \textnormal{max} }$ will depend on the example being considered and the default value of parameter $M_0$ is $10$.

\item {\bf Exact Integral Path Algorithm} or {\bf eIPA}: This is the method we obtain by replacing the tau-leap simulations in $\tau$IPA with the exact simulations performed with Gillespie's SSA \cite{GP}. This replacement can be easily made by choosing the step-size and the reaction firings according to Remark \ref{redtoSSA}. Moreover we need to change the method $\Call{EvaluateCoupledDifference}{}$ to the version given in \cite{Gupta2}. Note that eIPA is a new unbiased method for estimating parameter sensitivity, like the methods in Section \ref{subsec:unbiasedmethods}. This method is conceptually similar to PPA \cite{Gupta2}, but unlike PPA, the formula \eqref{integral_sensitivity_formula} underlying $\tau$IPA does not involve summation over the jumps of the process, which makes it more amenable for incorporating tau-leap schemes.

\item {\bf Exact Coupled Finite Difference} or {\bf eCFD}: This is same as the CFD method in \cite{DA}.

\item {\bf Exact Common Reaction Paths} or {\bf eCRP}: This is same as the CRP method in \cite{KSR1}. 

\item {\bf Tau Coupled Finite Difference} or {\bf $\tau$CFD}:  This method is the tau-leap version of CFD which has been proposed in \cite{morshed2017efficient}. Let $(Z_\theta, Z_{\theta +h})$ be the pair of tau-leap processes that approximate the processes $(X_\theta, X_{\theta+h})$, and suppose that at leap time $t_i$ their state is $( Z_\theta(t_i) ,Z_{\theta+h}(t_i)  ) = (z_1,z_2)$. If the next step-size is $\tau$, then for every reaction $k=1,\dots,K$, we set the number of firings $( \tilde{R}_{\theta,k} ,  \tilde{R}_{\theta+h,k})$ for this pair of processes as $\tilde{R}_{\theta,k} = A_k + \Call{Poisson}{  (\lambda_k(z_1)  - \lambda_k(z_1) \wedge \lambda_k(z_2)  )  \tau}$ and $ \tilde{R}_{\theta+h,k} =  A_k + \Call{Poisson}{  (\lambda_k(z_2)  - \lambda_k(z_1) \wedge \lambda_k(z_2)  )  \tau} $, where $A_k = \Call{Poisson}{ ( \lambda_k(z_1) \wedge \lambda_k(z_2)  )    \tau }$. Such a selection of reaction firings emulates the CFD coupling. To facilitate comparison, we choose the tau-leap simulation method to be the same as for $\tau$IPA.

\item {\bf Tau Common Reaction Paths} or {\bf $\tau$CRP}: This method can be viewed as the tau-leap version of CRP where the CRP coupling is emulated by coupling the Poisson random variables that generate the reaction firings. Using the same notation as before, if  $( Z_\theta(t_i) ,Z_{\theta+h}(t_i)  ) = (z_1,z_2)$ and the next step-size is $\tau$, then we set the number of firings $( \tilde{R}_{\theta,k} ,  \tilde{R}_{\theta+h,k})$ as $\tilde{R}_{\theta,k} = \Call{Poisson}{  \lambda_k(z_1)  \tau , k}$ and $ \tilde{R}_{\theta+h,k} =  \Call{Poisson}{  \lambda_k(z_2)  \tau, k }$ for every reaction $k=1,\dots,K$. Here we assume that there are $K$ parallel streams of independent $\textnormal{Uniform}[0,1]$ random variables (see \cite{KSR1}), and the method $\Call{Poisson}{ r, k}$ uses the uniform random variable from the $k$-th stream for generating the Poisson random variable with mean $r$. As for $\tau$CFD, the tau-leap simulation method is the same as for $\tau$IPA.
\end{enumerate}

In all the finite-difference schemes, we use perturbation-size $h=0.1$ and we \emph{center} the parameter perturbations to obtain better accuracy. This centering can be easily achieved by substituting $\theta$ with $( \theta -  h/2)$ and $(\theta + h)$ with $( \theta + h/2 )$ in the expression \eqref{fd:form1} and also in the definition of the coupled processes. Since we use Poisson random variables to generate the reaction firings for tau-leap simulations, it is possible that some state-components become negative during the simulation run. In this paper we deal with this problem rather crudely by setting the negative state-components to $0$. We have checked that this does not cause a significant loss of accuracy because the state-components become negative \emph{very rarely}.

Note that among the methods considered here, eIPA is the only unbiased sensitivity estimation method. All the other methods are biased either due to a finite-difference approximation of the derivative (eCFD and eCRP) or due to tau-leap approximation of the sample paths ($\tau$IPA) or due to both these reasons ($\tau$CFD and $\tau$CRP). In the examples, we apply each sensitivity estimation method $\mathcal{X}$ with a sample-size of $N = 10^5$, and compute the estimator mean $\hat{\mu}_N$ \eqref{defn_empirical_mean}, the standard deviation $\hat{\sigma}_N$ \eqref{est_std_dev}, the relative standard deviation $\textnormal{RSD}(\mathcal{X})$ and the computational cost per sample $\mathcal{C}(\mathcal{X})$ (see Section \ref{sec:prelim}). Assume that the exact sensitivity value is $s_0$ which is known. We compare the different estimation methods using the following two quantities -  the percentage \emph{relative error} (\textbf{RE}) defined by 
\begin{align}
\label{eqn_reerror}
\textnormal{RE} =  \left| \frac{  \hat{\mu}_N -s_0  }{s_0} \right| \times 100,
\end{align}
and the RSD \emph{adjusted computational cost} (\textbf{RSDCC}) defined by
\begin{align}
\label{vac_defn}
\textnormal{RSDCC}= ( \textnormal{RSD}(\mathcal{X}) )^2  \mathcal{C}(\mathcal{X}).
\end{align}
The first quantity $\textnormal{RE}$ measures the accuracy of a method, while the second quantity $\textnormal{RSDCC}$ determines the overall computational time that will be required by the method to yield an estimate with the desired statistical precision (see \eqref{overallcomputationcost}).

 Our numerical results will show that the exact schemes (eIPA, eCFD and eCRP) usually have a higher RSDCC than their tau-leap counterparts ($\tau$IPA, $\tau$CFD and $\tau$CRP), but expectedly their RE is lower. Generally the RE for eIPA is smaller than both eCFD and eCRP because of its unbiasedness and this advantage in accuracy often persists when we compare $\tau$IPA with $\tau$CFD and $\tau$CRP. It can be seen that in most of the cases, the sample variance $\mathcal{V}(\mathcal{X})$ or the estimator standard deviation \eqref{est_std_dev}, remain of similar magnitude, when we switch from an exact scheme to its tau-leap version (see Appendix \ref{sec:appB}). This supports our claim in Section \ref{using_tau_leap_methods}, that substituting exact paths with tau-leap trajectories allows one to trade-off bias with computational costs, and this trade-off relationship is somewhat ``orthogonal" to other trade-off relationships shown in Table \ref{tab1:tradeoff}.

In all the examples below, the propensity functions $\lambda_k$-s for all the reactions have the mass-action form \cite{DASurvey} unless stated otherwise. Also $\partial$ always denotes the partial-derivative w.r.t. the designated sensitive parameter $\theta$.

\subsection{Single-species birth-death model} \label{ex:bd}

Our first example is a simple birth-death model in which a single species $\mathcal{S}$ is created and destroyed according to the following two reactions:
\begin{align*}
\emptyset \stackrel{\theta_1 }{\longrightarrow} \mathcal{S} \stackrel{\theta_2 }{\longrightarrow} \emptyset.
\end{align*}
Let $\theta_1=10$, $\theta_2 = 0.1$ and assume that the sensitive parameter is $\theta = \theta_2$. Let $(X(t))_{t \geq 0}$ be the Markov process representing the reaction dynamics. Assume that $X(0)=0$. For $f(x)=x$ we wish to estimate
\begin{align*}
S_\theta(f,T) = \partial  \E\left( f( X(T) ) \right) = \partial   \E\left(  X(T)  \right)
\end{align*}
for $T= 5$ and $T = 10$. For this example, we set $\tau_{ \textnormal{max} }=0.5$. For each $T$ we estimate the sensitivity using all the six methods and the results are displayed in Table \ref{bdexample_table} in Appendix \ref{sec:appB}. For this network we can compute the sensitivity $S_\theta(f,T)$ exactly as the propensity functions are affine. These exact values are stated in the \emph{caption} of Table \ref{bdexample_table}, and they allow us to compute the RE of a method according to \eqref{eqn_reerror}. We also compute the RSDCC\footnote{All the computations in this paper were performed using C++ programs on an Apple machine with the 2.9 GHz Intel Core i5 processor.} for each method using \eqref{vac_defn}, and we compare these RE and RSDCC values for all the methods in Figure \ref{fig:birthdeath}{\bf A}. From these comparisons we can make the following observations: 1) The exact methods are typically more accurate than the tau-leap methods but they are usually more computationally demanding. 2) For $T = 5$, eCFD/eCRP are far more accurate than $\tau$CFD/$\tau$CRP suggesting that the two sources of bias (finite-difference and tau-leap approximations) are additive in nature. However the same is not true for $T = 10$. 3) For both the cases $T =5$ and $T  =10$, $\tau$IPA outperforms $\tau$CFD/$\tau$CRP in terms of accuracy even though it is slightly more computationally expensive. Same is true when we compare eIPA with eCFD/eCRP.

In Figure \ref{fig:birthdeath}{\bf B} we numerically analyze the performance of $\tau$IPA w.r.t. its two key parameters - the expected number of auxiliary paths $M_0$ and the maximum tau-leap step-size $\tau_{ \textnormal{max} }$. We see that RE is fairly insensitive to variations in $M_0$ while RSDCC first decreases with $M_0$ up to a certain point, and then it starts increasing with $M_0$. As we are using a first-order explicit tau-leap scheme, it is unsurprising that RE increases \emph{almost linearly} with $\tau_{ \textnormal{max} }$. However, importantly, RSDCC decreases \emph{exponentially} with $\tau_{ \textnormal{max} }$, which makes it possible to use tau-leap simulations to trade-off a small amount of accuracy for a large gain in computational efficiency with $\tau$IPA.

\begin{figure}[ht!]
\centering
\includegraphics[width=1 \textwidth]{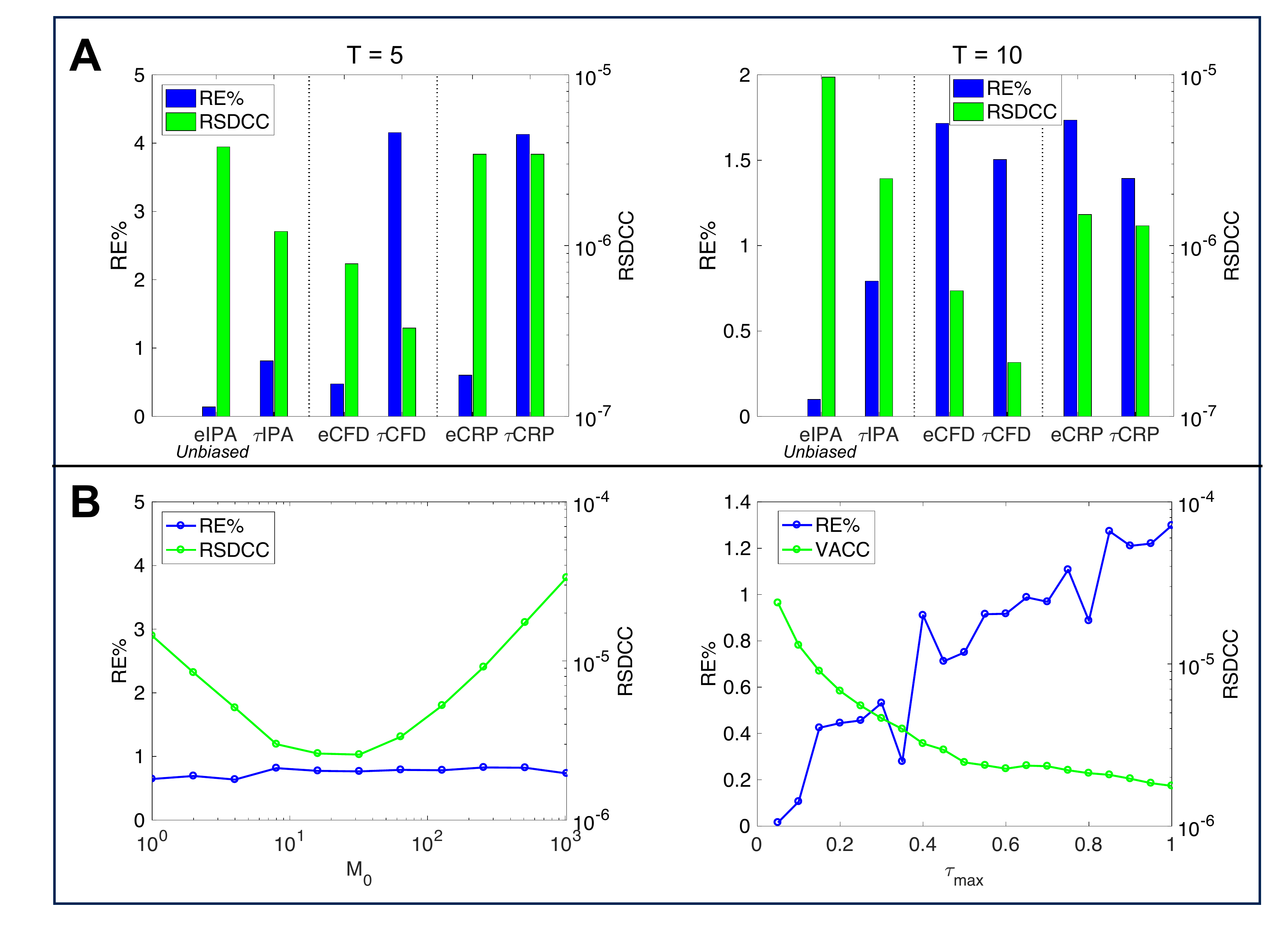}
\caption{{\bf Birth-death model:}  Panel {\bf A} compares the various sensitivity estimation methods in terms of the percentage relative error (RE) (calibrated with the left y-axis in linear scale) and the relative standard deviation adjusted computational cost (RSDCC) (calibrated with the right y-axis in log-scale). The sensitivities are estimated for $T = 5$ and $10$ using $N = 10^5$ samples. The results show how tau-leap methods trade-off accuracy with reduction in computational costs. Note that $\tau$IPA yields more accurate sensitivity estimates than $\tau$CFD/$\tau$CRP even though the associated computational costs are slightly higher. In panel {\bf B}, we study how the performance of  $\tau$IPA depends on parameters $M_0$ (expected number of auxiliary paths) and $\tau_{ \textnormal{max} }$ (maximum tau-leap step-size) for the case $T= 10$. Observe that as $M_0$ increases, RE does not change much but RSDCC behaves like a convex function with minimum around $M_0 = 20$. As $\tau_{ \textnormal{max} }$ increases, RE increases linearly but RSDCC drops exponentially making $\tau$IPA a viable method for trading off accuracy with computational efficiency for sensitivity estimation.} 
\label{fig:birthdeath}
\end{figure}

Observe that if we scale the production rate $\theta_1$ by the system-size or volume parameter $V$, then the \emph{concentration process}, derived by dividing the copy-number counts $X(t)$ by $V$, converges to a deterministic ODE limit as $V  \to \infty$ (see Chapter 11 in \cite{EK}). Often it is of interest to determine how the performance of various sensitivity estimation methods scales with the volume parameter $V$. We investigate this issue for the exact schemes (eIPA, eCFD and eCRP) in Figure \ref{fig:birthdeath_volume}, by numerically examining the dependence of their RSD, RSDCC and RE on $V$. Here we set the expected number of auxiliary paths $M_0$ for eIPA to be equal to $V$. Note that RSD for finite-difference schemes (eCFD/eCRP) scales like $1/\sqrt{V}$ as was proved in \cite{Rathinam2} and consequently their RSDCC is of order $1$, because the computational time per sample, which is proportional to the number of reaction events per unit time-interval, is of order $V$. Similar to these finite-difference schemes the RSD for eIPA also scales like $1/\sqrt{V}$, but its RSDCC is of order $V$ as its computational time per sample is of order $V^2$ because to generate each sample for eIPA, $M_0 = V$ auxiliary paths need to be simulated in addition to the main sample path. This computational disadvantage of eIPA is compensated by the fact that accuracy of eIPA improves with volume (i.e.\ RE decreases with volume), while for the finite-difference schemes it is almost a constant. These numerical results suggest that the computational efficiency of eIPA scales with volume $V$ in the same way as it does for the CGT method (see Section \ref{subsec:unbiasedmethods}) whose RSD has been shown to be of order $1$ w.r.t.\ volume $V$ (see \cite{Rathinam2}). Despite this similarity in volume scaling, eIPA is still a preferable unbiased method when compared to the CGT method, as its estimator variance does not become unbounded as the magnitude of the sensitive parameter approaches zero (see Section \ref{subsec:unbiasedmethods}). The volume-scaling analysis presented here can also be performed for the tau-leap schemes by parameterizing the step-size $\tau_{ \textnormal{max} }$ by volume $V$ as discussed in Section \ref{sec:tauest1}. We expect the results to be qualitatively similar to the exact schemes, because, as mentioned previously, it is observed that the sample variance remains similar when we switch from an exact scheme to its tau-leap version (see Appendix \ref{sec:appB}). However this needs to be investigated in detail in a future work.

\begin{figure}[ht!]
\centering
\includegraphics[width=1 \textwidth]{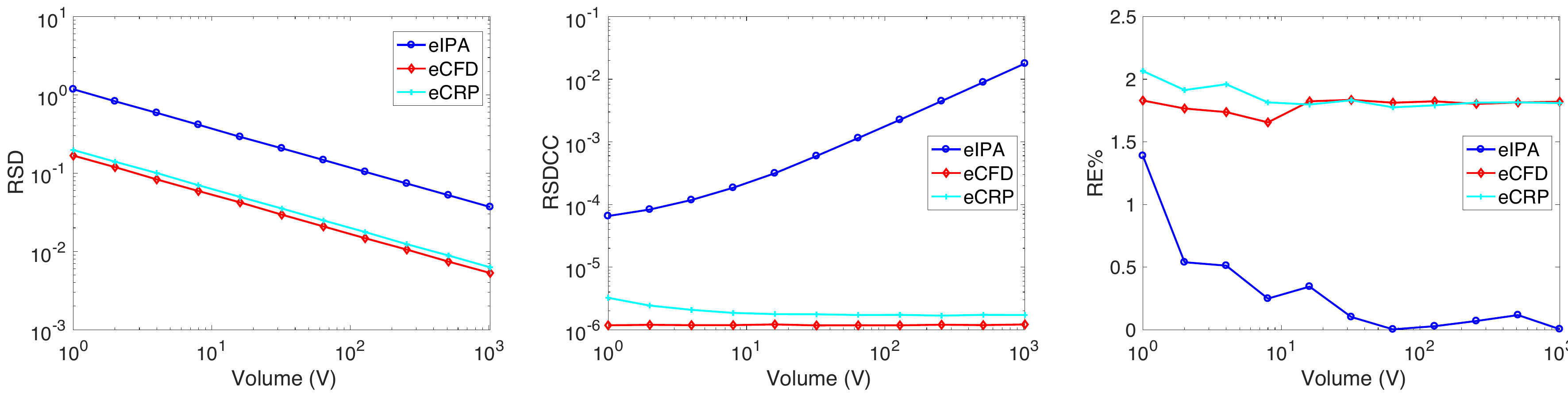}
\caption{{\bf Birth-death model:}  In this figure we examine how the performance of the exact schemes (eIPA, eCFD and eCRP) varies with the system-size represented by volume $V$. We study the case $T = 10$ by replacing the production rate $\theta_1$ by $\theta_1 V$. For eIPA we set the expected number of auxiliary paths as $M_0=V$. The three plots compare the three exact schemes in terms of their relative standard deviation (RSD),  the relative standard deviation adjusted computational cost (RSDCC) and the percentage relative error (RE). Note that the $x$-axis for volume $V$ is in log-scale, and $y$-axis for RSD and RSDCC is in log-scale but for RE it is in linear-scale. Observe that RSDCC is of order $V$ for eIPA but it is of order $1$ for eCFD/eCRP. However for eIPA, the accuracy increases with $V$ (i.e.\ RE decreases with $V$), while it remains the same for eCFD/eCRP.
} 
\label{fig:birthdeath_volume}
\end{figure}


\subsection{Repressilator Network}  \label{ex:ge}
Our second example considers the \emph{Repressilator} network given in \cite{elowitz2000synthetic}, which consists of three mutually repressing gene-expression modules (say 1,2 and 3). Repression occurs at the level of transcription, i.e.\ production of the three mRNAs $M_1$, $M_2$ and $M_3$, and it is carried out by the corresponding protein molecules $P_1$, $P_2$ and $P_3$ in a cyclic pattern. In other words, protein $P_i$ represses the transcription of mRNA $M_{i-1}$, where we identify $M_0$ with $M_3$. The repression mechanism is modeled with a nonlinear Hill function. The repressilator network consists of $6$ biomolecular species and $12$ reactions described in Table \ref{tab:repress1}.

\begin{table}[h!]
\centering
\begin{tabular}{|c | l | l| }
\hline 
No. & Reaction & Propensity \\
\hline
1 & $ \emptyset   \longrightarrow M_1$ & $\lambda_{1}(x) = 1 + 100/(1 + x^{\alpha_1}_5)$ \\
2 & $ \emptyset   \longrightarrow  M_2$ & $\lambda_{2}(x) = 1 + 100/(1 + x^{\alpha_2}_6)$ \\
3 & $ \emptyset  \longrightarrow  M_3$ & $\lambda_{3}(x) = 1 + 100/(1 + x^{\alpha_3}_4)$ \\
4 & $M_1 \longrightarrow \emptyset $ &  $\lambda_{4}(x) =  x_1$ \\
5 & $M_2 \longrightarrow \emptyset $ &  $\lambda_{5}(x) =  x_2$ \\
6 & $M_3 \longrightarrow \emptyset $ &  $\lambda_{6}(x) = x_3$  \\
7 & $M_1 \longrightarrow M_1 + P_1 $ &   $\lambda_{7}(x) = 50 x_1$ \\
8 & $M_2 \longrightarrow M_2 + P_2 $ &   $\lambda_{8}(x) = 50 x_2$ \\
9 & $M_3 \longrightarrow M_3 + P_3 $ &   $\lambda_{9}(x) = 50 x_3$ \\
10 & $P_1 \longrightarrow \emptyset $ &   $\lambda_{10}(x) = \gamma_1 x_4$ \\
11 & $P_2 \longrightarrow \emptyset $ &   $\lambda_{11}(x) = \gamma_2 x_5$ \\
12 & $P_3 \longrightarrow \emptyset $ &   $\lambda_{12}(x) = \gamma_3 x_6$ \\
\hline
\end{tabular}
\caption{Reactions for the \emph{Repressilator} network \cite{elowitz2000synthetic}. Here $x = (x_1,\dots,x_6 )$ denotes the copy-numbers of the 6 network species ordered as $M_1$, $M_2 $, $M_3$, $P_4$, $P_5$ and $P_6$. }
\label{tab:repress1}
\end{table}

We set the Hill coefficient $\alpha_i$ for the transcription of each mRNA to be $1$ (see reactions 1-3 in Table \ref{tab:repress1}) and the degradation rate constant $\gamma_i$ for each protein to be $0.1$ (see reactions 10-12 in Table \ref{tab:repress1}). Let $(X(t))_{t \geq 0}$ be the $\N^6_0$-valued Markov process representing the reaction dynamics, under the species ordering described in the caption of Table \ref{tab:repress1}. We assume that $X(0)= (0,0,0,0,0,0)$ and define $f : \N^6_0 \to \R$ by $f(x_1,\dots,x_6) = x_4$. At $T = 10$, our goal is to estimate
\begin{align}
\label{sens_example2}
S_\theta(f,T) = \partial \E \left( f(X(T)) \right) = \partial   \E ( X_{4}(T) ),
\end{align}
for $\theta =  \alpha_1,\alpha_2, \alpha_3, \gamma_1, \gamma_2, \gamma_3$. These values measure the sensitivity of the mean of protein $P_1$ population at time $T=10$ with respect to the Hill coefficients $\alpha_i$-s and the protein degradation rates $\gamma_j$-s. For this example, we set $\tau_{ \textnormal{max} }=0.01$.

For each $\theta$ we estimate the sensitivity using all the six methods and the results are displayed in Table \ref{repressexample_table} in Appendix \ref{sec:appB}. Unlike the previous example, we cannot compute the sensitivity values exactly because of nonlinearity of some of the propensity functions. So we obtain accurate approximations of these values using the unbiased estimator (eIPA) with a large sample size ($N = 10^6$) and they are provided in the \emph{caption} of Table \ref{repressexample_table}. With these values we can compute the REs \eqref{eqn_reerror}, which are then compared along with RSDCCs for all the methods in Figure \ref{fig:repress}. The results vary with the choice of the sensitive parameter $\theta$, but one can clearly see that $\tau$IPA can be several times more accurate than $\tau$CFD /$\tau$CRP even though its RSDCC is of a similar magnitude. This is especially observable for cases $\theta = \alpha_1, \alpha_3$ and $\gamma_2$. Most notably for the case $\theta = \alpha_1$, the RE for finite-difference schemes is around $800\%$, while it is $1.3\%$ for eIPA and $5\%$ for $\tau$IPA.

\begin{figure}[ht!]
\centering
\includegraphics[width = 1 \textwidth]{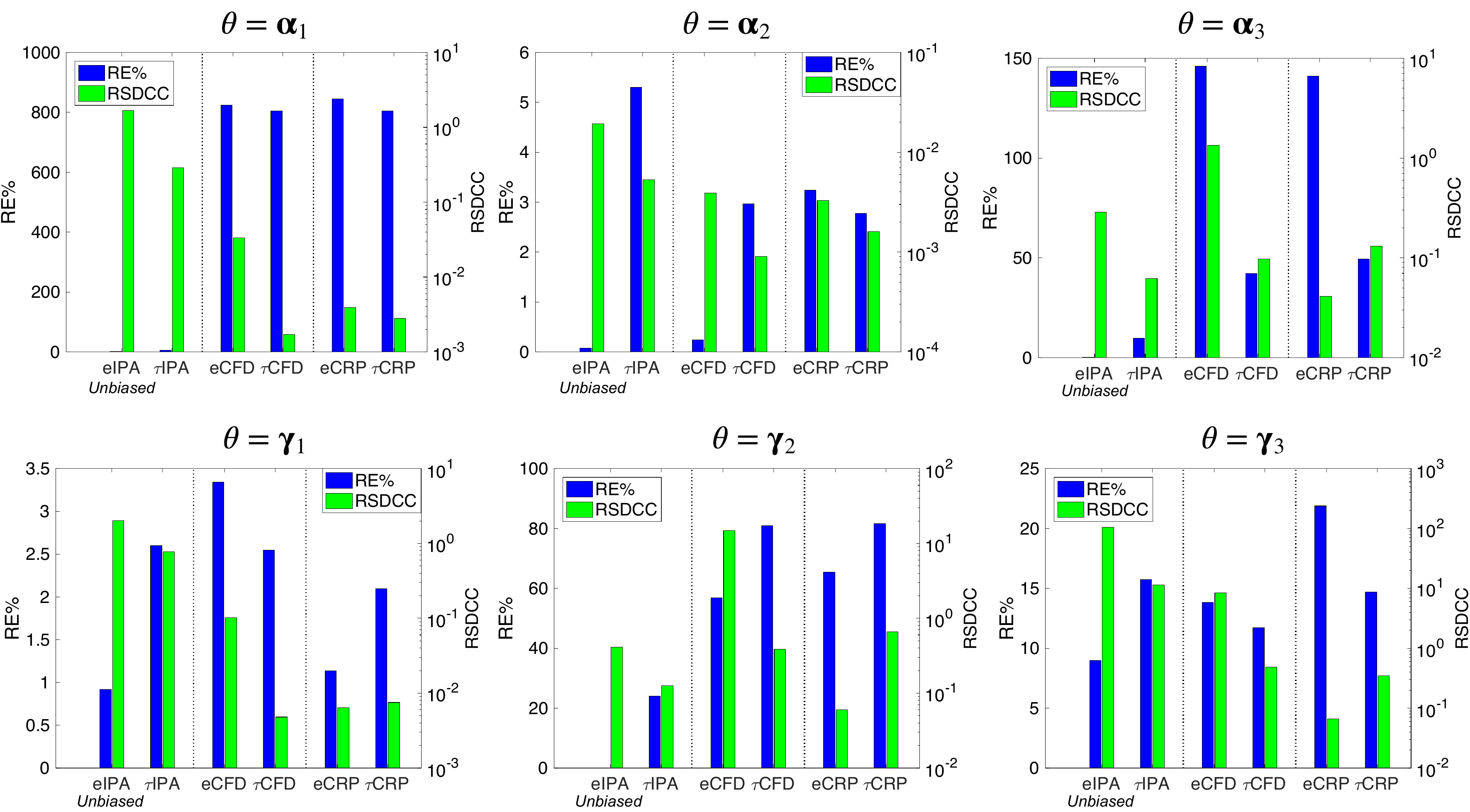}
\caption{ {\bf Repressilator Network:} This figure compares the various sensitivity estimation methods in terms of the percentage relative error (RE) (calibrated with the left y-axis in linear scale) and the relative standard deviation adjusted computational cost (RSDCC) (calibrated with the right y-axis in log-scale).The sensitivities are estimated for $\theta = \alpha_1,\alpha_2, \alpha_3, \gamma_1, \gamma_2$ and $\gamma_3$ using $N = 10^5$ samples. Observe that for some parameters $\tau$IPA is several times more accurate than $\tau$CFD/$\tau$CRP.
}
\label{fig:repress}
\end{figure}

\subsection{Genetic toggle switch} \label{ex:gts}
As our last example we look at a simple network with nonlinear propensity functions. Consider the network of a genetic toggle switch proposed by Gardner et.\ al.\ \cite{Gardner}. This network has two species $\mathcal{U}$ and $\mathcal{V}$ that interact through the following four reactions 
\begin{align*}
\emptyset \stackrel{\lambda_1 }{\longrightarrow} \mathcal{U} , \  \  \mathcal{U} \stackrel{ \lambda_2 }{\longrightarrow} \emptyset,  \  \ \emptyset \stackrel{\lambda_3 }{\longrightarrow} \mathcal{V}  \   \textrm{ and } \mathcal{V}  \stackrel{ \lambda_4}{\longrightarrow} \emptyset,
\end{align*}
where the propensity functions $\lambda_i$-s are given by
\begin{align*}
\lambda_1(x_1,x_2) = \frac{\alpha_1}{ 1 +x_2^{\beta} }, \ \ \lambda_2(x_1,x_2) = x_1 , \ \  \lambda_3(x_1,x_2) = \frac{\alpha_2}{ 1 +x_1^{\gamma} } \   \textrm{ and } \ \ \lambda_4(x_1,x_2) = x_2. 
\end{align*}
In the above expressions, $x_1$ and $x_2$ denote the number of molecules of $\mathcal{U}$ and $\mathcal{V}$ respectively. We set $\alpha_1 =50$, $\alpha_2 = 16$, $\beta = 2.5$ and $\gamma = 1$. 
Let $(X(t))_{t \geq 0}$ be the $\N^2_0$-valued Markov process representing the reaction dynamics with initial state $(X_{1}(0) , X_{2}(0)) = (0,0)$. For $T = 10$ and $f(x) = x_1$, our goal is to estimate
\begin{align*}
S_\theta(f,T) =  \partial \E \left( f(X(T)) \right) =  \partial   \E ( X_1(T) ),
\end{align*} 
for $\theta = \alpha_1,\alpha_2,\beta$ and $\gamma$. In other words, we would like to measure the sensitivity of the mean of the number of $\mathcal{U}$ molecules at time $T =10$, with respect to all the model parameters. For this example, we set $\tau_{ \textnormal{max} }=0.1$. We estimate these sensitivities with all the six methods and the results are presented in Table \ref{gtsexexample_table} in Appendix \ref{sec:appB}, and in Figure \ref{fig:gts}{\bf A}.

As in the previous example, we estimate the true sensitivity values using the unbiased estimator (eIPA) with a large sample size ($N = 10^6$). These approximate values are given in the \emph{caption} of Table \ref{gtsexexample_table} and they were used in computing the relative errors \eqref{eqn_reerror} for Figure \ref{fig:gts}. Here we find that eIPA outperforms eCFD/eCRP both in terms of accuracy and computational efficiency for all the parameters. Similarly $\tau$IPA is computationally more efficient than $\tau$CFD/$\tau$CRP for all the parameters, but except for the case $\theta = \alpha_1$, its accuracy is similar to $\tau$CFD/$\tau$CRP. In Figure \ref{fig:gts}{\bf B} we numerically examine how the performance of $\tau$IPA is affected by the parameter $M_0$, for a couple of cases. As in Section \ref{ex:bd}, we find this effect to be quite small for RE but RSDCC first decreases with $M_0$ and then increases.

\begin{figure}[ht!]
\centering
\includegraphics[width=1 \textwidth]{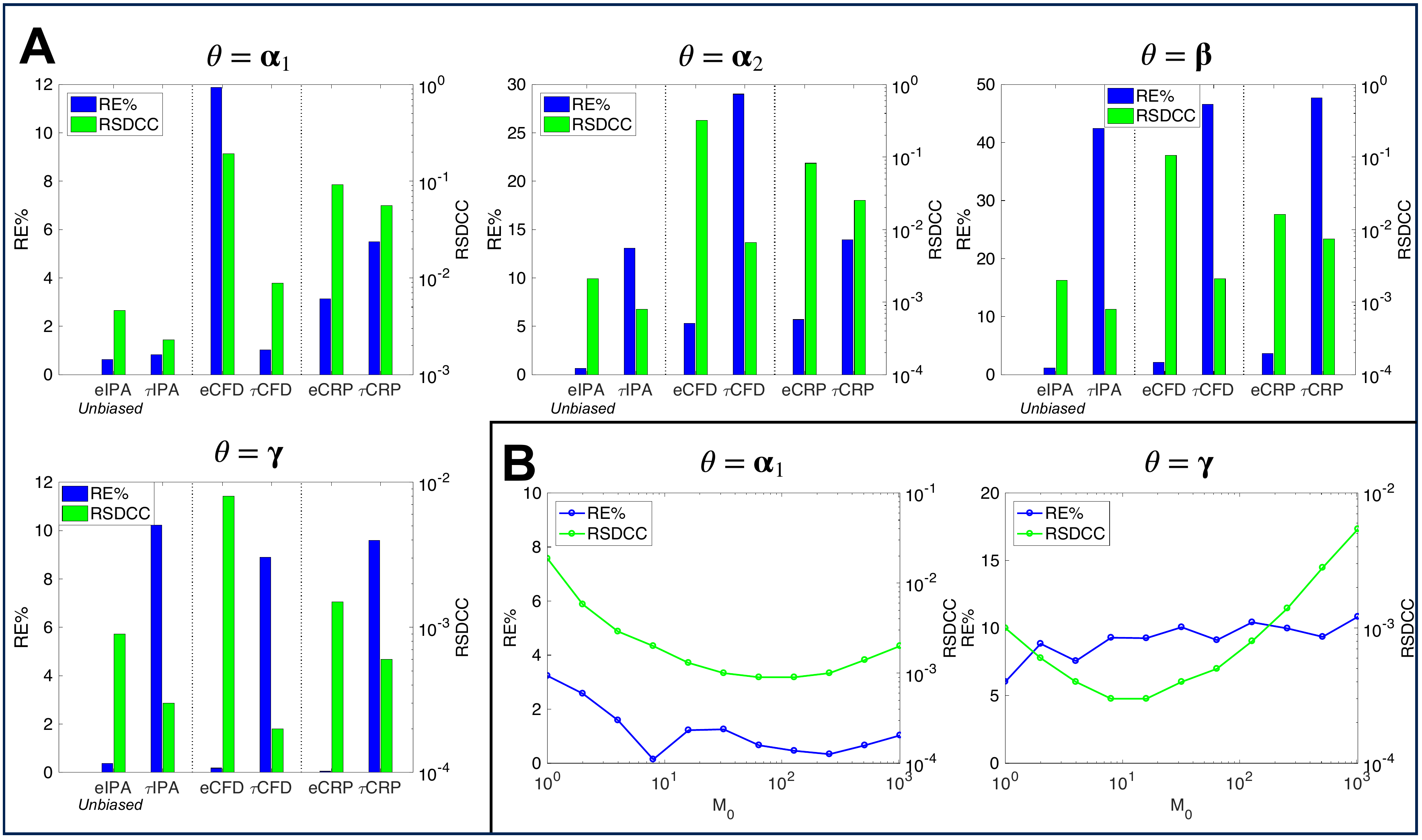}
\caption{{\bf Genetic toggle switch:} Panel {\bf A} compares the various sensitivity estimation methods in terms of the percentage relative error (RE) (calibrated with the left y-axis in linear scale) and the relative standard deviation adjusted computational cost (RSDCC) (calibrated with the right y-axis in log-scale). The sensitivities are estimated for $\theta = \alpha_1, \alpha_2, \beta$ and $\gamma$ using $N = 10^5$ samples. In this example, eIPA performs better than eCFD/eCRP, both in terms of accuracy and computational efficiency, while $\tau$IPA performs better than $\tau$CFD/$\tau$CRP only in terms of computational efficiency. In panel {\bf B}, we study how the performance of  $\tau$IPA depends on parameter $M_0$ (expected number of auxiliary paths) for the cases $\theta = \alpha_1$ and $\theta = \gamma$. Results are similar to those in panel {\bf B} of Figure \ref{fig:birthdeath}.}
\label{fig:gts}
\end{figure}

\section{Conclusions and future work} \label{sec:conc}

Estimation of parameter sensitivities for stochastic reaction networks in an important and difficult problem. The main source of difficulty is that all the estimation methods rely on exact simulations of the reaction dynamics performed using Gillespie's SSA \cite{GP} or its variants \cite{NR,AndMod}. It is well-known that these simulation algorithms are computationally very demanding as they track each and every reaction event which can be very cumbersome. This issue represents the main bottleneck in the use of sensitivity analysis for systems modeled as stochastic reaction networks. The aim of this paper is to develop a method, called \emph{Tau Integral Path Algorithm} ($\tau$IPA), that feasibly deals with this issue by requiring only approximate tau-leap simulations of the reaction dynamics, and still providing provably accurate estimates for the sensitivity values. This method is based on an explicit integral representation for parameter sensitivity that was derived from the formula given in \cite{Our}. Furthermore, by replacing the tau-leap simulation scheme in $\tau$IPA with an exact simulation scheme like SSA, we obtain a new unbiased method (called eIPA) for sensitivity estimation, that can serve as the natural limit of $\tau$IPA when the step-size $\tau$ gets smaller and smaller.

Using computational examples we compare $\tau$IPA with tau-leap versions of the finite-difference schemes \cite{DA,KSR1,morshed2017efficient} that are commonly employed for sensitivity estimation. We find that in many cases, $\tau$IPA outperforms these tau-leap finite-difference schemes in terms of both accuracy and computational efficiency. This makes $\tau$IPA an appealing method for sensitivity analysis of stochastic reaction networks, where the exact dynamical simulations are computationally infeasible and tau-leap approximations become necessary.

As we argue in Section \ref{using_tau_leap_methods}, tau-leap simulations provide a natural way to \emph{trade-off} estimator bias with gains in computational speed. Therefore it would be of fundamental importance to extend the ideas in this paper and try to \emph{maximize} the computational gains from tau-leap simulations while sacrificing the \emph{minimum} amount of accuracy. In this context, we now mention two possible directions for future research. The method we proposed here, $\tau$IPA, can work with any underlying tau-leap simulation scheme, but for simplicity we examined it with the most basic tau-leap scheme i.e.\ an explicit Euler method with a constant (deterministic) step-size and Poissonian reaction firings \cite{tleap1}. As this tau-leap scheme has several drawbacks (see \cite{GillespieRev}), it is very likely that $\tau$IPA can yield much better results if a more sophisticated tau-leap scheme is employed, possibly with random step-sizes \cite{tleap2,AndersonPost,Tempone2014}, or with Binomial leaps \cite{Burrage2004} or using implicit step-size selection \cite{Rathinam2003}. We shall explore these issues in a future paper. Note that $\tau$IPA essentially converts the problem of estimating parameter sensitivities to the problem of estimating a collection of expected values of the process with tau-leap simulations. The latter problem can be efficiently handled using \emph{multilevel} strategies, where estimators are constructed for a range of $\tau$-values, and are suitably coupled to simultaneously reduce the estimator's bias and variance \cite{Anderson2012,Lester2015,Tempone2014}. A promising approach would be to integrate these multilevel estimators with $\tau$IPA to improve its accuracy and computational efficiency. 

\section*{Appendix\label{sec:APPENDIX}}

\subsection{Proofs of the main results} \label{sec:appA}
\begin{proof}[Proof of Theorem \ref{thm:main}]
Let $\{\mathcal{F}_t\}$ be the filtration generated by the process \\$(X_\theta(t))_{t \geq 0}$ and let $\sigma_i$ be its $i$-th jump time for $i=1,2,\dots$. We define $\sigma_0 = 0$ for convenience. Since the process $(X_\theta(t))_{t \geq 0}$ is constant between consecutive jump times we can write
\begin{align}
\label{mainthmproof0}
& \E\left(  \int_{0}^T \frac{ \partial \lambda_k ( X_\theta (t) , \theta ) }{ \partial \theta   } \Delta_{\zeta_k} f(X_\theta(t))   dt \right) \notag  \\
& = \sum_{i=0}^\infty \E\left(\frac{ \partial \lambda_k ( X_\theta (\sigma_i) , \theta ) }{ \partial \theta   } \Delta_{\zeta_k} f(X_\theta( \sigma_i) )  (\sigma_{i+1} \wedge T - \sigma_{i} \wedge T) \right)   \notag  \\
& =  \sum_{i=0}^\infty \E\left(  \E\left(\frac{ \partial \lambda_k ( X_\theta (\sigma_i) , \theta ) }{ \partial \theta   }\Delta_{\zeta_k} f(X_\theta(\sigma_i) )   (\sigma_{i+1} \wedge T - \sigma_{i} \wedge T) \middle\vert \mathcal{F}_{\sigma_i}  \right) \right) \notag  \\
& = \E\left(   \sum_{  i = 0 : \sigma_i < T   }^{\infty}  \frac{ \partial \lambda_k ( X_\theta (\sigma_i) , \theta ) }{ \partial \theta   } \left( f(X_\theta(\sigma_i) +\zeta_k) - f(X_\theta(\sigma_i) )  \right)  \E\left( \delta_i  \middle\vert \mathcal{F}_{\sigma_i}, \sigma_i <T  \right)  \right),
\end{align}
where $\delta_i = \sigma_{i+1} \wedge T -\sigma_i \wedge T$ and the last equality holds due to linearity of the expectation operator and the fact that $\delta_i = 0$ if $\sigma_i \geq T$. Given $X_{\theta}(\sigma_i) = y $ and $ \sigma_i = u < T$, the distribution of the random variable $\delta_i$ has the \emph{cumulative density function} given by
\begin{align*}
\P( \delta_i < s \vert    X_{\theta}(\sigma_i) = y ,\sigma_i = u) = \left\{
\begin{array}{cl}
0 &  \textnormal{ if } s < 0 \\ 
1 - e^{ -\lambda_0(y,\theta)s }  &  \textnormal{ if }  0 \leq s < (T - u) \\
1  & \textnormal{ if } s \geq (T - u).
\end{array}
\right.
\end{align*}
This shows that for any continuous function $g: [0,\infty) \to [0,\infty)$ we have
\begin{align}
\label{ibpforg}
  & \E\left( \int_{0}^{\delta_i} g(s)ds  \middle \vert    X_{\theta}(\sigma_i) = y ,\sigma_i = u  \right)  = e^{-\lambda_0(y,\theta)(T-u) } \int_{0}^{T -u} g(s)ds  \\ + & \int_{0}^{ T - u} \lambda_0(y,\theta) e^{ -\lambda_0(y,\theta)s }  \left( \int_{0}^{s} g(t) dt \right) ds =  \int_{0}^{T-u} e^{ -\lambda_0(y,\theta)s }  g(s)ds,  \notag
\end{align}
where the last relation holds because by applying integration by parts we get
\begin{align*}
& \int_{0}^{ T - u} \lambda_0(y,\theta) e^{ -\lambda_0(y,\theta)s }  \left( \int_{0}^{s} g(t) dt \right) ds \\
& = - e^{-\lambda_0(y,\theta)(T-u) } \int_{0}^{T -u} g(s)ds+ \int_{0}^{T-u} e^{ -\lambda_0(y,\theta)s }  g(s)ds .
\end{align*}
Taking $g \equiv 1$ gives us $\E\left(  \delta_i  \middle \vert    X_{\theta}( \sigma_i) = y ,\sigma_i = u  \right) = \int_{0}^{T-u} e^{ -\lambda_0(y,\theta)s } ds$ and therefore
\begin{align*}
\E\left( \delta_i \middle\vert \mathcal{F}_{\sigma_i}, \sigma_i <T  \right) = \int_{0}^{T-\sigma_i} 
e^{ -\lambda_0( X_\theta(\sigma_i),\theta)s } ds= \int_{0}^{T-\sigma_i} 
e^{ -\lambda_0( X_\theta(\sigma_i),\theta)(T - \sigma_i - s) } ds.
\end{align*}
Substituting this in \eqref{mainthmproof0} we obtain
\begin{align}
\label{expectationintegraloperator}
& \E\left(  \int_{0}^T \frac{ \partial \lambda_k ( X_\theta (t) , \theta ) }{ \partial \theta   } \Delta_{\zeta_k} f(X_\theta(t) ) dt \right) \notag  \\
& =  \E\left(   \sum_{  i = 0 : \sigma_i < T   }^{\infty}  \frac{ \partial \lambda_k ( X_\theta (\sigma_i) , \theta ) }{ \partial \theta   }
\Delta_{\zeta_k} f(X_\theta(\sigma_i) )  \int_{0}^{T-\sigma_i} 
e^{ -\lambda_0( X_\theta(\sigma_i),\theta) (T - \sigma_i -s) } ds \right).
\end{align}


Theorem 2.3 in \cite{Our} shows that the sensitivity value $S_\theta (f,T)$ can be expressed as
\begin{align*}
 \E\left[ \sum_{k = 1}^K \left( \int_{0}^T \frac{ \partial \lambda_k ( X_\theta (t) , \theta ) }{ \partial \theta   }   \Delta_{\zeta_k} f(X_\theta(t)  ) dt + \sum_{  i = 0 : \sigma_i < T   }^{\infty}  R_{\theta}( X_\theta( \sigma_i) ,f, T -\sigma_i ,k) \right) \right]
\end{align*} 
where
\begin{align*}
R_{\theta}(x,f,t,k) =   \frac{  \partial  \lambda_k ( x ,\theta ) }{ \partial \theta}  \int_{0}^{t} \left(  \Delta_{\zeta_k} \Psi_{\theta}(x,f,s)  - \Delta_{\zeta_k} f(x)  \right) e^{ - \lambda_0(x,\theta)  (t-s)  } ds.
\end{align*}  
Using this fact along with \eqref{expectationintegraloperator} we obtain
\begin{align*}
&S_{\theta}(f,T) \\
  & =   \sum_{k = 1}^K  \E\left(   \sum_{  i = 0 : \sigma_i < T   }^{\infty}  \frac{ \partial \lambda_k ( X_\theta (\sigma_i) , \theta ) }{ \partial \theta   } \Delta_{\zeta_k} f(X_\theta( \sigma_i ) )  \int_{0}^{T-\sigma_i} 
e^{ -\lambda_0( X_\theta(\sigma_i),\theta)(T-\sigma_i -s) } ds \right)  \\ & + \E\left(
 \sum_{  i = 0 : \sigma_i < T   }^{\infty}  \frac{  \partial  \lambda_k ( X_\theta(\sigma_{i}) ,\theta ) }{ \partial \theta}  R_{\theta}( X_\theta( \sigma_i) ,f, T -\sigma_i ,k) \right) \\
 & =  \sum_{k = 1}^K  \E\left(   \sum_{  i = 0 : \sigma_i < T   }^{\infty}  \frac{ \partial \lambda_k ( X_\theta (\sigma_i) , \theta ) }{ \partial \theta   }  \Bigg(  R_{\theta}( X_\theta(\sigma_i) ,f, T -\sigma_i ,k) \right.\\&\left. +  \Delta_{\zeta_k} f(X_\theta( \sigma_i ) )  \int_{0}^{T-\sigma_i} 
e^{ -\lambda_0( X_\theta(\sigma_i),\theta)(T-\sigma_i -s) } ds   \right) \Bigg) \\
& =  \sum_{k = 1}^K  \E\left(   \sum_{  i = 0 : \sigma_i < T   }^{\infty}  \frac{ \partial \lambda_k ( X_\theta (\sigma_i) , \theta ) }{ \partial \theta   } G_\theta(X_\theta(\sigma_i),f,T - \sigma_i,k)  \right), 
\end{align*}
where 
\begin{align*}
G_\theta(y,f,t,k) =  \int_{0}^{t } \Delta_{\zeta_k}\Psi_{\theta}(y,f,s) e^{ -\lambda_0( y,\theta)(t -s) } ds = \int_{0}^{t } \Delta_{\zeta_k}\Psi_{\theta}(y,f,t-s) e^{ -\lambda_0( y,\theta)s } ds . 
\end{align*}
However relation \eqref{ibpforg} with $g(s) =  \Delta_{\zeta_k}\Psi_{\theta}(X_\theta(\sigma_i) ,f,T-\sigma_i-s)$ implies that given $X_{\theta}( \sigma_i)  $ and $\sigma_i  <T$, we have
\begin{align*}
G_\theta(X_{\theta}( \sigma_i),f,T - \sigma_i,k) &= \E\left( \int_{0}^{\delta_i}  \Delta_{\zeta_k}   \Psi_{\theta}(X_\theta(\sigma_i)  ,f,T-\sigma_i-s)  ds  \middle \vert    X_{\theta}( \sigma_i) ,\sigma_i   \right) \\
&  = \E\left( \int_{ \sigma_i  }^{\sigma_{i} +\delta_i } \Delta_{\zeta_k}   \Psi_{\theta}(X_\theta(\sigma_i)  ,f,T-s)   ds  \middle \vert    X_{\theta}(\sigma_i)  ,\sigma_i   \right) \\
& =  \E\left( \int_{\sigma_i \wedge T}^{\sigma_{i+1} \wedge T } \Delta_{\zeta_k} \Psi_{\theta}(X_\theta(\sigma_i)  ,f,T-s) ds  \middle \vert    X_{\theta}( \sigma_i)  ,\sigma_i   \right).
\end{align*}
Substituting this in the last expression for $S_{\theta}(f,T)$ and using the fact that $X_\theta(s) = X_\theta(\sigma_i)$ for all $s \in [ \sigma_i ,\sigma_{i+1})$ we get
\begin{align*}
S_{\theta}(f,T)&  =   \sum_{k = 1}^K  \E\left(   \sum_{  i = 0   }^{\infty}  \E\left( \int_{\sigma_i \wedge T}^{\sigma_{i+1} \wedge T  } \frac{ \partial \lambda_k ( X_\theta (s) , \theta ) }{ \partial \theta   }   \Delta_{ \zeta_k} \Psi_{\theta}(X_\theta(\sigma_i)  ,f,T-s)   ds  \middle\vert \mathcal{F}_{\sigma_i}  \right)  \right) \\
& =  \sum_{k = 1}^K  \sum_{  i = 0   }^{\infty}   \E\left( \int_{\sigma_i \wedge T}^{\sigma_{i+1} \wedge T  } \frac{ \partial \lambda_k ( X_\theta (s) , \theta ) }{ \partial \theta   }    \Delta_{ \zeta_k}  \Psi_{\theta}(X_\theta(\sigma_i)  ,f,T-s)  ds    \right) \\
& = \sum_{k = 1}^K    \E\left( \int_{0}^{T  } \frac{ \partial \lambda_k ( X_\theta (s) , \theta ) }{ \partial \theta   }  \Delta_{ \zeta_k} \Psi_{\theta}(X_\theta(\sigma_i)  ,f,T-s) ds    \right).
\end{align*}
This completes the proof of this result.
\end{proof}

\begin{proof}[Proof of Theorem \ref{thm-tau-conv}]
For each $k=1,\dots,K$ define $g_k,h_k$ by \\ $g_k(x,t) = \partial \lambda_k(x) \Delta_{\zeta_k} \Psi(xk,f,T-t)$ and $h_k(x,t) = \partial \lambda_k(x)  \Delta_{\zeta_k}  \tilde{\Psi}_{\alpha_1,\beta_1(t)}(x_k,f,T-t)$.
Without loss of generality, we can assume that there exists a $C>0$ such that 
\[
\max\{\partial \lambda_k(x), f(x) \, | k =1,\dots,K \} \leq C (1 + \|x\|^p), \forall x
\in \N^d_0. 
\]
Then due to Lemma \ref{lem-phi-Ass123} we obtain 
\begin{equation}\label{eq-hk-gk-bnd}
\begin{aligned}
&\sup_{t \in [0,T]} |h_k(x,t) - g_k(x,t)| \\
&\leq \partial \lambda_k(x) C
C_1(p,T,\alpha_1) \left((1 + \|x\|^{\xi(p)}) + (1+\|x+\zeta_k\|^{\xi(p)}) \right)
\tau_{ \textnormal{max} }^\gamma \\ 
&\leq C^2 C_1(p,T,\alpha_1) (1 + \|x\|^{p}) \left((1 + \|x\|^{\xi(p)}) +
(1+\|x+\zeta_k\|^{\xi(p)}) \right) \tau_{ \textnormal{max} }^\gamma\\
&\leq c_0(p) C^2 C_1(p,T,\alpha_1) \left(1 + \|x\|^{(p+\xi(p))}\right) \tau_{ \textnormal{max} }^\gamma ,
\end{aligned}
\end{equation}
where $c_0(p)$ is a constant that depends only on $p$ as well as $\zeta_1,\dots,\zeta_K$. Lemma \ref{lem-phi-Ass123} also shows that 
\begin{equation}\label{eq-hk-bnd}
\begin{aligned}
\sup_{t \in [0,T]} |h_k(x,t)| &\leq \partial \lambda_k(x) C C_3(p,T,\alpha_1) \left( (1
+ \|x\|^p) + (1+\|x+\zeta_k\|^p) \right)\\
&\leq c_1(p) C^2 C_3(p,T,\alpha_1) (1 + \|x\|^{2p})
\end{aligned} 
\end{equation}
and $\sup_{t \in [0,T]} |g_k(x,t)| \leq c_1(p) C^2 C_2(p,T) (1 + \|x\|^{2p})$, where $c_1(p)$ is again a constant that depends only on $p$ and $\zeta_1,\dots,\zeta_K$.

From \eqref{eq-hk-bnd} and Lemma \ref{lem-phi-Ass123} it follows that
\begin{align}
\label{eq-hkZ-hkX}
 &\sup_{t \in [0,T]} |\E(h_k(Z_{\alpha_0,\beta_0}(x_0,t),t)) - \E(h_k(X(t),t))| \\
& \leq c_1(p) C^2 C_3(p,T,\alpha_1) C_1(2p,T,\alpha_0) \left(1 + \|x_0\|^{\xi(2p)}\right) \tau_{ \textnormal{max} }^\gamma. \notag
\end{align}
Moreover from \eqref{eq-hk-gk-bnd}, we get
\[
\E(|h_k(X(t),t)-g_k(X(t),t)|) \leq c_0(p) C^2 C_1(p,T,\alpha_1) (1 +
\E(\|X(t)\|^{(p+\xi(p))}) ) \tau_{ \textnormal{max} }^\gamma,
\]
and hence using Assumption 2, we obtain
\begin{align}
\label{eq-hkX-gkX}
& \sup_{t \in [0,T]} \E(|h_k(X(t),t)-g_k(X(t),t)|) \\
& \leq c_0(p) C^2 C_1(p,T,\alpha_1) C_2(p+\xi(p),T) \left(1 + \|x_0\|^{p + \xi(p)}\right) \tau_{ \textnormal{max} }^\gamma. \notag
\end{align}

Note that 
\begin{align*}
\left| \tilde{S}(f,T)-S(f,T) \right| & = \left| \sum_{k=1}^K \int_{0}^T \left( \E(h_k(Z_{\alpha_0,\beta_0}(x_0,t),t)) - \E(g_k(X(t),t))    \right)dt  \right| \\
& \leq \sum_{k=1}^K \Big{|}\int_0^T \E(h_k(Z_{\alpha_0,\beta_0}(x_0,t),t)) dt - \int_0^T \E(g_k(X(t),t)) dt \Big{|} \\
& \leq \sum_{k=1}^K  \int_0^T |\E(h_k(Z_{\alpha_0,\beta_0}(x_0,t),t)) - \E(h_k(X(t),t))|dt \\
&+ \sum_{k=1}^K \int_0^T |\E(h_k(X(t),t)) - \E(g_k(X(t),t))|dt.
\end{align*}
Using \eqref{eq-hkZ-hkX} and \eqref{eq-hkX-gkX} we obtain the bound
\begin{align*}
\left| \tilde{S}(f,T)-S(f,T) \right| &\leq K T c_1(p) C^2 C_3(p,T,\alpha_1) C_1(2p,T,\alpha_0) \left(1 + \|x_0\|^{\xi(2p)}\right) \tau_{ \textnormal{max} }^\gamma\\ 
&+ K T c_0(p) C^2 C_1(p,T,\alpha_1) C_2(p+\xi(p),T) \left(1 + \|x_0\|^{p + \xi(p)}\right) \tau_{ \textnormal{max} }^\gamma,
\end{align*}
which proves the theorem.
\end{proof}

\subsection{Supplementary Tables and Algorithms} \label{sec:appB}

\begin{table}[h!]
\begin{center} {\small
\begin{tabular}{||c || c | c | c | c || c | c | c | c |}
\hline 
 &   \multicolumn{4}{|c||}{eIPA}  & \multicolumn{4}{|c|}{$\tau$IPA}  \\ \hline 
 $T$  & Mean & Std Dev & RE$\%$ & RSDCC &Mean & Std Dev & RE$\%$ & RSDCC \\ \hline \hline 
5 &  -90.079 &  0.093 &  0.139 &  0.379E-5   &  -90.938 &  0.078 &  0.813 &  0.121E-5     \\ 
10    &-264.5 &  0.309 &  0.099 & 0.97E-5 & -266.34 &  0.243 & 0.793 &  0.247E-5  \\ \hline  
 &   \multicolumn{4}{|c||}{eCFD}  & \multicolumn{4}{|c|}{$\tau$CFD}  \\ \hline 
 $T$  & Mean & Std Dev  & RE$\%$ & RSDCC &Mean & Std Dev   & RE$\%$ & RSDCC \\ \hline \hline 
5 &   -90.632 &  0.088 &  0.4746 &  0.078E-5  &  -86.456 & 0.089 & 4.155 & 0.033E-5   \\ 
10    & -268.77 & 0.142 & 1.716 & 0.054E-5 & -268.214 & 0.146 &  1.503 &  0.021E-5  \\ \hline
 &   \multicolumn{4}{|c||}{eCRP}  & \multicolumn{4}{|c|}{$\tau$CRP}  \\ \hline 
 $T$  & Mean & Std Dev & RE$\%$ & RSDCC&Mean & Std Dev  & RE$\%$ & RSDCC \\ \hline \hline 
5 &  -90.749 & 0.097 &  0.604 & 0.343E-5  & -86.481 & 0.098 & 4.128 & 0.343E-5   \\ 
10    &-268.82 & 0.169 & 1.734 & 0.152E-5  &  -267.92 & 0.173 & 1.393 & 0.131E-5  \\ \hline
\end{tabular}  }
\end{center}
\caption{{\bf Birth-death model:} Sensitivity estimation results for $T = 5,10$. For all the methods, $N = 10^5$ are used to estimate the following quantities -  the estimator mean \eqref{defn_empirical_mean}, the standard deviation \eqref{est_std_dev}, the relative error (RE) percentage \eqref{eqn_reerror} and the relative standard deviation adjusted computation cost (RSDCC) \eqref{vac_defn} in seconds. The exact sensitivity values are $-90.204$ for $T = 5$ and $-264.241$ for $T = 10$.} 
\label{bdexample_table}
\end{table}

\begin{table}[h!]
\begin{center} {\small
\begin{tabular}{||c || c | c | c | c || c | c | c | c |}
\hline 
 &   \multicolumn{4}{|c||}{eIPA}  & \multicolumn{4}{|c|}{$\tau$IPA}  \\ \hline 
 $\theta$  & Mean &Std Dev & RE$\%$ & RSDCC &Mean & Std Dev   & RE$\%$ & RSDCC \\ \hline \hline 
$\alpha_1$ &1.202 &  0.0107 &  0.625 &  0.0046  &  1.185 &  0.0131 &  0.822 & 0.0023 \\ 
$\alpha_2$ & -2.133 & 0.0132&  0.663 & 0.0021& -2.3968 & 0.0148 & 13.087 & 0.0008 \\ 
$\beta$ &  -5.924  & 0.0419 & 1.144 &  0.0020  & -8.5372  & 0.0562 & 42.456 & 0.0008  \\ 
$\gamma$ & 54.372 & 0.1679 & 0.367 &  0.0009  & 60.156 & 0.191 & 10.232 & 0.0003
   \\  \hline  
 &   \multicolumn{4}{|c||}{eCFD}  & \multicolumn{4}{|c|}{$\tau$CFD}  \\ \hline 
 $\theta$  & Mean & Std Dev  & RE$\%$ & RSDCC &Mean & Std Dev   & RE$\%$ & RSDCC \\ \hline \hline 
$\alpha_1$ &  1.053 & 0.11&  11.883&  0.1925  & 1.183 & 0.0491 & 1.021& 0.0088 \\ 
$\alpha_2$  & -2.007 & 0.267 & 5.305 & 0.3219 & -2.734 &  0.0991 &  29.011 & 0.0066 \\ 
$\beta$ &  -5.865 & 0.4535 & 2.1339 &0.1053  &   -8.787 & 0.1813 & 46.617 & 0.0021  \\ 
$\gamma$ & 54.67 & 1.1589 & 0.1794 & 0.0080  &  59.431 & 0.3907& 8.9044 & 0.0002   \\  \hline  
 &   \multicolumn{4}{|c||}{eCRP}  & \multicolumn{4}{|c|}{$\tau$CRP}  \\ \hline 
 $\theta$  & Mean & Std Dev & RE$\%$ & RSDCC  &Mean & Std Dev  & RE$\%$ & RSDCC \\ \hline \hline 
$\alpha_1$ & 1.158 & 0.0793 & 3.13 & 0.0919  &  1.129 & 0.0781 & 5.4895 & 0.0562 \\ 
$\alpha_2$  & -1.999 & 0.1306 &  5.701& 0.0823 &    -2.415 &  0.1109 & 13.9646 & 0.0254 \\ 
$\beta$ &   -6.21 & 0.1777 & 3.625 & 0.0161 & -8.853 & 0.2198 & 47.7203 & 0.0074    \\ 
$\gamma$ & 54.546 & 0.4756 & 0.0469& 0.0015  & 59.807 & 0.4267 & 9.5925 & 0.0006   \\  \hline  
\end{tabular} }
\end{center} 
\caption{{\bf Genetic toggle switch:} Sensitivity estimation results w.r.t. all the model parameters $\alpha_1, \alpha_2 , \beta$ and $\gamma$. For all the methods, $N = 10^5$ are used to estimate the following quantities -  the estimator mean \eqref{defn_empirical_mean}, the standard deviation \eqref{est_std_dev}, the relative error (RE) percentage \eqref{eqn_reerror} and the relative standard deviation adjusted computation cost (RSDCC) \eqref{vac_defn} in seconds. The true sensitivity values are approximately $1.195 \pm 0.009$ for $\theta = \alpha_1$, $-2.1194 \pm 0.01$ for $\theta = \alpha_2$, $ -5.9929 \pm 0.035$ for $\theta = \beta$ and $54.5721 \pm 0.133 $ for $\theta = \gamma$. These values are estimated with eIPA using $10^6$ samples and they are expressed in the form $s_0 \pm l$, which signifies that the $99\%$ confidence interval is $(s_0 - l,s_0+l)$.
}
\label{gtsexexample_table}
\end{table}

\begin{table}[h!]
\begin{center} {\small
\begin{tabular}{||c || c | c | c | c || c | c | c | c |}
\hline 
 &   \multicolumn{4}{|c||}{eIPA}  & \multicolumn{4}{|c|}{$\tau$IPA}  \\ \hline 
 $\theta$  & Mean & Std Dev  & RE$\%$ & RSDCC &Mean & Std Dev  & RE$\%$ & RSDCC \\ \hline \hline 
$\alpha_1$ & -67.73& 1.17 &1.31& 1.6801 &-65.2 & 0.8 &5 &  0.2886   \\ 
$\alpha_2$ &-2982.2 &10.6  &0.078 & 0.0193 &-2821.8 &7.66 &5.3 & 0.0053 \\ 
$\alpha_3$  &145.36&1 &0.22 & 0.2880 &131.04&0.73 &9.66 &  0.0623 \\ 
$\gamma_1$ & 259.45 & 8.86 & 0.92 & 2.0139  &250.4& 8.2 &2.6 &  0.7723  \\ 
$\gamma_2$ &-119.38 & 1.01 &0.13 & 0.4097 &-90.78 & 0.74  & 24.1 & 0.1251  \\
$\gamma_3$ &-30.38  & 7.82  & 8.98 & 104.45  &-23.45 &2.97  &15.75 & 11.484 \\  \hline  
 &   \multicolumn{4}{|c||}{eCFD}  & \multicolumn{4}{|c|}{$\tau$CFD}  \\ \hline 
 $\theta$  & Mean & Std Dev & RE$\%$ & RSDCC &Mean & Std Dev  & RE$\%$ & RSDCC \\ \hline \hline 
$\alpha_1$ &-633.79 &6.21 &823.5 & 0.0334 &-621.15 &2.1 &805.1 & 0.0017 \\ 
$\alpha_2$ &-2987.1 &10.01 &0.24 & 0.0039 &-2891.5& 7.16  &2.97 & 0.0009  \\ 
$\alpha_3$  &356.95 &22.3 &146.1&1.3379  &206.2 &5.3  &42.19 &0.0972  \\ 
$\gamma_1$ &265.69 & 4.59 &3.34 & 0.1019  &250.5&1.43 &2.5 &  0.0048 \\ 
$\gamma_2$ &-51.61 & 10.7 &56.8  & 14.764 &-22.8&1.16 &80.9  & 0.3845  \\ 
$\gamma_3$ &-31.74 &4.98 &13.85  &8.407 &-24.61 &1.42 &11.72 & 0.4871 \\  \hline  
 &   \multicolumn{4}{|c||}{eCRP}  & \multicolumn{4}{|c|}{$\tau$CRP}  \\ \hline 
 $\theta$  & Mean & Std Dev  & RE$\%$ & RSDCC &Mean & Std Dev   & RE$\%$ & RSDCC \\ \hline \hline 
$\alpha_1$&-648.1& 2.38 &844.3& 0.0039  &-620.9 &2.1 &804.7 &0.0028  \\ 
$\alpha_2$ &-3076.6&10.5 &3.2 & 0.0033 &-2897.2&7.5 &2.8&0.0016  \\ 
$\alpha_3$  &349.55 &4.18 & 141& 0.041 &216.6 & 5.09& 49.4& 0.1315  \\ 
$\gamma_1$ &260.01 &1.23 &1.14 & 0.0064 & 251.7 &1.41 &2.1&  0.0075  \\ 
$\gamma_2$ &-41.29 &0.6 & 65.5 & 0.0602 &-21.91 &1.16 &81.7 &  0.6639 \\ 
$\gamma_3$&-33.98 &0.52&21.88& 0.0666 &-23.78 &0.91 &14.7 & 0.3494 \\  \hline    
\end{tabular}  }
\end{center}
\caption{{\bf Repressilator model:} Sensitivity estimation results w.r.t. model parameters $\alpha_1, \alpha_2 , \alpha_3,\gamma_1,\gamma_2$ and $\gamma_3$. For all the methods, $N = 10^5$ are used to estimate the following quantities -  the estimator mean \eqref{defn_empirical_mean}, the standard deviation \eqref{est_std_dev}, the relative error (RE) percentage \eqref{eqn_reerror} and the relative standard deviation adjusted computation cost (RSDCC) \eqref{vac_defn} in seconds. The exact sensitivity values are approximately $-68.6271 \pm 1$ for $\theta = \alpha_1$, $-2979.88 \pm 8$ for $\theta = \alpha_2$, $145.041 \pm 0.7$ for $\theta = \alpha_3$, $257.091 \pm 7.4$ for $\theta = \gamma_1$, $-119.526 \pm 0.9$ for $\theta = \gamma_2$ and $-27.8796 \pm 4.5$ for $\theta = \gamma_3$. These values are estimated with eIPA using $10^6$ samples and they are expressed in the form $s_0 \pm l$, which signifies that the $99\%$ confidence interval is $(s_0 - l,s_0+l)$}
\label{repressexample_table}
\end{table}

{\small
\begin{algorithm}[h]                       
\caption{Estimates the normalizing constant $C$ using $N_0$ simulations of the tau-leap process $Z$}
\label{estimatenormalization}  
\begin{algorithmic}[1]
\Function{Select-Normalizing-Constant}{$x_0,M_0, T$}    
\State Set $S = 0$
\For { $i = 1$ to $N_0$}
	\State Set $z = x_0$ and $t = 0$
	\While {$t < T$}
		\State  Calculate $\tau= \Call{GetTau}{z,t,T}$
		\For { $k = 1$ to $K$}
			\State Update $S  \gets S + \tau \left|\partial \lambda_k(z) \right|$
		\EndFor
		\State Update $t \gets t +\tau$  
\State Set $( \tilde{R}_1,\dots, \tilde{R}_K) = \Call{GetReactionFirings}{z,\tau }$.
\State Set $z \gets z + \sum_{k=1}^K \zeta_k \tilde{R}_k$.
	\EndWhile
\EndFor 
\State \Return $ S/(N_0 M_0) $
\EndFunction
\end{algorithmic}
\end{algorithm}

}

\begin{algorithm}[h]  
\caption{ Used to evaluate $\hat{D}_{ki}$ given by \eqref{eq-Dki}}           
 \label{gendiffsample}
\begin{algorithmic}[1]
\Function{EvaluateCoupledDifference}{$z_1,z_2,t, T$}    
\While { $z_1 \neq z_2$ \textbf{ AND }  $t < T$  } 
\State Set $\tau_1 = \Call{GetTau}{z_1,t,T}$, $\tau_2 = \Call{GetTau}{z_2,t,T}$ and $\tau = \tau_1 \wedge \tau_2$
	\For {$k=1$ to $K$}
		\State Set $A_{k1} = \lambda_k(z_1) \wedge \lambda_k(z_2)$, $A_{k2} = \lambda_k(z_1) - A_{k1} $ and $A_{k3} = \lambda_k(z_2) - A_{k1} $
	\State Set $\tilde{R}_{k i} =  \Call{Poisson } {A_{ki} \tau } $ for $i=1,2,3$ 
	\State Update $z_1 \gets z_1 + \tilde{R}_{k1} \zeta_{k} + \tilde{R}_{k2} \zeta_{k} $
	\State Update $z_2 \gets z_2 + \tilde{R}_{k1} \zeta_{k} + \tilde{R}_{k3} \zeta_{k} $
	\State Update $t \gets t+\tau$
	\EndFor
\EndWhile
\State \Return $f(z_2)-f(z_1)$
\EndFunction
\end{algorithmic}
\end{algorithm}

\bibliographystyle{abbrv}

\end{document}